
\documentclass[preprint,12pt,number]{elsarticle}




\usepackage{amssymb}
\usepackage{amsmath}
\usepackage{amsthm}

\usepackage{hyperref}

\usepackage{subcaption}


\newtheorem{theorem}{Theorem}
\newtheorem{proposition}[theorem]{Proposition}%
\newtheorem {lemma} [theorem] {Lemma}
\newtheorem{remark}[theorem]{Remark}%
\newtheorem{corollary}[theorem]{Corollary}%

\DeclareMathOperator{\argmin}{arg\,min}

\newcommand{\1}{\mathbf{1}}


\begin{document}

\begin{frontmatter}



\title{Extended Argmin-Theorems for multiple nets of multivariate c\`{a}dl\`{a}g stochastic processes} 


\author[label1]{Dietmar Ferger}
\ead{dietmar.ferger@tu-dresden.de}
\author[label2]{Niklas Rosar}
\ead{niklasaaron.rosar@tu-dresden.de} 

\affiliation[label1]{organization={Institut für Mathematische Stochastik, Technische Universität
Dresden},
            addressline={Zellescher Weg 12-14},
            city={Dresden},
            postcode={01069},
            country={Germany}}

\affiliation[label2]{organization={Formerly at Institut für Mathematische Stochastik, Technische Universität
Dresden},
            addressline={Zellescher Weg 12-14},
            city={Dresden},
            postcode={01069},
            country={Germany}}

\begin{abstract}
Consider finitely many nets of multivariate c\`{a}dl\`{a}g stochastic processes. We show that the vectors consisting of the respective
minimizing points converge in distribution to a random closed set. This set is given as a cartesian product with factors which are equal to the
set of all minimizing points of stochastic processes occurring as functional limits of the respective nets. If these limit processes have almost surely exactly one minimizer, then the vectors converge classically in distribution to the vector of these minimizers.
\end{abstract}



\begin{keyword}
multi-dimensional minimizing points \sep c\`{a}dl\`{a}g stochastic processes \sep multivariate Skorokhod space \sep random closed sets \sep capacity-and containment functionals.


\end{keyword}

\end{frontmatter}



\section{Introduction} \label{SectionIntroduction}
Assume that for each $n \in \mathbb{N}$ the random map $\xi_n$ is a minimizing point of a stochastic process $X_n$ defined on a probability space $(\Omega, \mathcal{A},\mathbb{P})$ with trajectories in a
function space $\textbf{F}$ endowed with some topology $\mathcal{T}$. If $(X_n)_{n \in \mathbb{N}}$ converges in distribution to a limit process $X$, then the question arises what can be said about a possible distributional convergence of the sequence $(\xi_n)_{n \in \mathbb{N}}$ and how does the limit variable looks like? Argmin-theorems give answers to that question. Since $\xi_n$ is a maximizing point of $-X_n$, every Argmin-theorem yields an Argmax-theorem and vice versa. The first Argmin-theorem in the above spirit goes back to Kim and Pollard (1990) \citep{kim1990cube}. They deal with $\textbf{F}=l^\infty(H)$, the space of all locally bounded real-valued functions on $H=\mathbb{R}^d$ equipped with the topology $\mathcal{T}$ of uniform convergence on compacta. Here, the problem occurs that the stochastic processes $X_n$ need not to be measurable maps into $l^\infty(H)$ endowed with the Borel-$\sigma$ algebra $\sigma(\mathcal{T})$.
As a way out, Kim and Pollard \cite{kim1990cube} use the general concept of distributional convergence in the sense of Hoffmann-J{\o}rgensen (1998) \cite{hoffmann1998convergence}, where no measurability is required. This type of convergence is denoted by
\begin{equation} \label{Hoffmann}
X_n \rightsquigarrow X  \; \text{ in } l^\infty(H).
\end{equation}
It is (for general metric spaces $H$) equivalent to, e.g.
\begin{equation} \label{charopen}
 \liminf_{n \rightarrow \infty}\mathbb{P}_*(X_n \in O) \ge \mathbb{P}(X \in O) \quad \text{for all open } O \subseteq l^\infty(H),
\end{equation}
where $\mathbb{P}_*$ is the inner probability of $\mathbb{P}$. See Theorem 1.3.4 in van der Vaart and Wellner \cite{van1996weak} for several other equivalent characterisations of \eqref{Hoffmann}.
From \eqref{charopen}, one can immediately see how the Hoffmann-J{\o}rgensen-convergence $\rightsquigarrow$ depends on the underlying topology $\mathcal{T}$ on $l^\infty(H)$. Therefore, we occasionally write more precisely $X_n \rightsquigarrow X  \; \text{ in } (l^\infty(H), \mathcal{T})$.
Suppose that in addition to $(\ref{Hoffmann})$, the sequence $(\xi_n)_{n \in \mathbb{N}}$ is stochastically bounded, i.e
\begin{equation} \label{stochasticallybounded}
 \lim_{k \rightarrow \infty} \limsup_{n \rightarrow \infty} \mathbb{P}^*(||\xi_n||>k)=0,
\end{equation}
where $||\cdot||$ denotes any norm on the euclidian space $H=\mathbb{R}^d$ and $\mathbb{P}^*$ is the outer probability of $\mathbb{P}$.
If moreover  $X$ possesses an unique minimizing point $\xi$ with probability one, then Kim and Pollard \cite{kim1990cube} prove that
\begin{equation} \label{Hoffmannconvergence}
\xi_n \rightsquigarrow \xi \; \text{ in } H.
\end{equation}
van der Vaart and Wellner (1996) \cite{van1996weak} extend this result from $\mathbb{R}^d$ to a metric space $H$. In their proof, which is much simpler than that of Kim and Pollard \cite{kim1990cube}, they use that (\ref{Hoffmann}) is equivalent to the convergence of the restrictions $X_n \rightsquigarrow X  \; \text{ in } l^\infty(K)$ for all compact $K \subseteq H$, confer Theorem 1.6.1 in \cite{van1996weak}. However, they have to pay a small price for the more general set up, because the assumption (\ref{stochasticallybounded}) must be replaced by uniform tightness of $(\xi_n)_{n \in \mathbb{N}}$, i.e. for each $\eta>0$ there exists a compact  $K \subseteq H$ such that
\begin{equation} \label{uniformtightness}
\limsup_{n \rightarrow \infty}\mathbb{P}^*(\xi_n \notin K) \le \eta.
\end{equation}
Indeed, in metric spaces $H$ closed and bounded sets are in general not compact, so that the requirement (\ref{uniformtightness}) is stronger than the generalisation of (\ref{stochasticallybounded}) to metric spaces. (Of course, when $H=\mathbb{R}^d$ they are equivalent.)
It should be mentioned that \cite{kim1990cube} and \cite{van1996weak} both consider more generally $\xi_n$, which are $\epsilon_n$-optimal solutions, i.e. $X_n(\xi_n) \le \inf_{t \in H} X_n(t)+\epsilon_n$ with $(\epsilon_n)$ converging to zero in probability. Clearly, if $\epsilon_n=0$, then $\xi_n$ is a minimizer of $X_n$.

These two Argmin-theorems are no longer applicable, when the limit process $X$ has more than one minimizer with positive probability.
This situation is far away to be pathological as it frequently occurs in applications. It typically arises in the analysis of regression functions with a
break-point. Here, the limit $X$ turns out to be two-sided compound Poisson process on $\mathbb{R}$ with drift upwards, which in general has a finitely union of compact intervals as its set of all minimizing points. See for instance, Pons (2003) \cite{pons2003estimation}, Kosorock and Sen (2007) \cite{kosorok2007inference}, Kosorock (2008) \cite{kosorok2008introduction}, Lan et al. (2009) \cite{lan2009change}, Ferger and Klotsche  (2009) \cite{ferger2009estimation} or Albrecht (2020) \cite{albrecht2020}. Notice, that a compound Poisson process with upwards drift has with probability one a smallest and largest minimizing point, say $\sigma$ and $\tau$, which do not coincide: $\sigma < \tau$. In Ferger (2004) \cite{ferger2004continuous} we give a first solution for $H=\mathbb{R}$ in the non-unique case. Under $(\ref{Hoffmann})$ and $(\ref{stochasticallybounded})$ it is shown that for all $x \in \mathbb{R}$,
\begin{equation} \label{sigma}
 \limsup_{n \rightarrow \infty} \mathbb{P}^*(\xi_n \le x) \le \mathbb{P}(\sigma \le x)
\end{equation}
and
\begin{equation} \label{tau}
 \liminf_{n \rightarrow \infty} \mathbb{P}_*(\xi_n < x) \ge \mathbb{P}(\tau < x),
\end{equation}
where $\sigma$ and $\tau$ are the smallest and largest minimizer of $X$. These results carry over without problems to
$\textbf{F}=C(\mathbb{R})$, the space of all continuous real functions on $\mathbb{R}$ equipped with the topology of uniform convergence on compacta.
Here, the convergence in (\ref{Hoffmann}) reduces to usual distributional convergence, i.e.
\begin{equation} \label{CR}
 X_n \stackrel{\mathcal{D}}{\rightarrow} X \quad \text{ in } C(\mathbb{R}).
\end{equation}
Moreover, the outer and inner probability occurring in (\ref{stochasticallybounded}), (\ref{sigma}) and (\ref{tau}) can be replaced by $\mathbb{P}$, because all participating maps are measurable. Notice that similarly as for $\textbf{F}=l^\infty(H)$ the requirement (\ref{CR}) is equivalent to
the convergence of the restrictions on compact intervals:
\begin{equation} \label{CK}
 X_n \stackrel{\mathcal{D}}{\rightarrow} X \quad \text{ in } C[-a,a] \quad \text{for all } a>0,
\end{equation}
confer Theorem 5 in Whitt (1970) \cite{whitt1970weak}. This is useful to know, because there are several sufficient conditions for (\ref{CK}), confer
Theorems 8.1 and 8.2 and especially the nice moment-criterion in Theorem 12.3 in Billingsley (1968) \cite{Billingsley1968}.

The space $\textbf{F}=\mathbb{R}^\mathbb{Z}$ of all sequences $f=(f(k): k \in \mathbb{Z})$ with the product topology occurs naturally in change-point analysis. Here, the counterpart of (\ref{Hoffmann}) is the convergence of the finite dimensional distributions of $X_n$ to $X$.
Ferger \cite{ferger2004continuous} shows this and (\ref{stochasticallybounded}) for a class of change-point estimators.
As a result we obtain limit theorems of type (\ref{sigma}) and (\ref{tau}) without asterisks (by measurability as above), where the limit process $X$ is a two-sided random walk on the integers.

Now, in the above mentioned examples from regression analysis the trajectories of the involved processes live in the space $D(\mathbb{R})$ of all
right-continuous functions $f:\mathbb{R} \rightarrow \mathbb{R}$ with left limits endowed with the Skorokhod-topology.
Also in this case Ferger \cite{ferger2004continuous}
proves the following Argmin-theorem: If the restrictions of the $X_n$ to compact intervals $[-a,a]$ converge, i.e.
\begin{equation} \label{Dcompact}
 X_n \stackrel{\mathcal{D}}{\rightarrow} X \quad \text{ in } D[-a,a] \quad \text{for all real } a>0,
\end{equation}
and $\xi_n=O_\mathbb{P}(1)$ (stochastical boundedness), then
\begin{equation} \label{sigmatau}
\limsup_{n \rightarrow \infty} \mathbb{P}(\xi_n \le x) \le \mathbb{P}(\sigma \le x) \text{ and }   \liminf_{n \rightarrow \infty} \mathbb{P}(\xi_n < x) \ge \mathbb{P}(\tau < x).
\end{equation}
Analogously to continuous stochastic processes (\ref{Dcompact}) is equivalent to $ X_n \stackrel{\mathcal{D}}{\rightarrow} X \text{ in } D(\mathbb{R})$, confer Theorem 16.7 in Billingsley \cite{Billingsley1999}. As a consequence, one can use e.g. the easy to handle criterion in Theorem 13.5 in Billingsley (1999) \cite{Billingsley1999} to ensure (\ref{Dcompact}).

If $\mathcal{O}_<:=\{(-\infty,x): x \in \mathbb{R}\} \cup \{\emptyset,\mathbb{R}\}$ and $\mathcal{O}_>:=\{(x,\infty): x \in \mathbb{R}\} \cup \{\emptyset,\mathbb{R}\}$ denote the left-order topology and right-order topology on $\mathbb{R}$, then we obtain from (\ref{sigma}) and (\ref{tau}) that
$\xi_n \rightsquigarrow \sigma$ in $(\mathbb{R},\mathcal{O}_>)$ and $\xi_n \rightsquigarrow \tau$ in $(\mathbb{R},\mathcal{O}_<)$. Recall that our notation expresses that $\mathcal{O}_>$ and $\mathcal{O}_<$  are the respective underlying topologies. For $\textbf{F} \in \{D(\mathbb{R}),C(\mathbb{R}), \mathbb{R}^\mathbb{Z}\}$ one gets $\xi_n \stackrel{\mathcal{D}}{\rightarrow} \sigma$ in $(\mathbb{R},\mathcal{O}_>)$ and $\xi_n \stackrel{\mathcal{D}}{\rightarrow} \tau$ in $(\mathbb{R},\mathcal{O}_<)$, i.e. traditional distributional convergence, however not with respect to the natural topology $\mathcal{O}_n$ on $\mathbb{R}$, but with respect to the weaker topologies $\mathcal{O}_>$ and $\mathcal{O}_<$.
Assume that $\mathbb{E}[\sigma]= \mathbb{E}[\tau]$ as for instance when $\sigma \stackrel{\mathcal{D}}{=} \tau$. Then actually $\sigma=\tau$ almost surely (a.s.), since $0=\mathbb{E}[\tau]-\mathbb{E}[\sigma]=\mathbb{E}[\tau-\sigma]$ and $\tau-\sigma \ge 0$ by definition. So, $X$ has a unique minimizing point a.s. if and only if $\sigma \stackrel{\mathcal{D}}{=} \tau$. In particularly, in this situation
$\xi_n \rightsquigarrow \sigma$ in $(\mathbb{R},\mathcal{O}_>)$ and $\xi_n \rightsquigarrow \sigma$ in $(\mathbb{R},\mathcal{O}_<)$, whence
by  Example 3.5 in Ferger (2024) \cite{ferger2024semi}, $\xi_n \stackrel{\mathcal{D}}{\rightarrow} \sigma$ in $(\mathbb{R},\mathcal{O}_n)$ so that we obtain classical convergence in distribution.

In $\mathbb{R}^d$ with dimension $d \ge 2$ the notion of smallest and largest minimizer is not given a priori.
Theorem 3.2 of Seijo and Sen (2011) \cite{seijo2011continuous} goes exactly in this direction for $\textbf{F}=D(\mathbb{R}^d)$, the multivariate Skorokhod-space. In Definition 2.4 \cite{seijo2011continuous} they introduce the functionals sargmin and largmin, which by using the idea of lexicographic order give a smallest and largest minimizing point of a function $f \in D(\mathbb{R}^d)$. For $X_n$ and $X$ with some specific trajectories in $D(\mathbb{R}^d)$ and so-called associated jump processes $\Gamma_n$ and $\Gamma$ it is shown that if $(X_n,\Gamma_n)$ converges in distribution to $(X,\Gamma)$ and $(\mbox{sargmin}(X_n),\mbox{largmin}(X_n))=O_\mathbb{P}(1),$ then in fact
\begin{equation} \label{SeijoSen}
 (\mbox{sargmin}(X_n),\mbox{largmin}(X_n)) \rightsquigarrow (\mbox{sargmin}(X),\mbox{largmin}(X)) \quad \text{in } \mathbb{R}^d \times \mathbb{R}^d.
\end{equation}
An alternative approach to Seijo and Sen  \cite{seijo2011continuous} that does not use the concept of smallest and largest minimizer in $\mathbb{R}^d$ is introduced in Ferger (2015) \cite{ferger2015arginf}.
We prove that, if $X_n \stackrel{\mathcal{D}}{\rightarrow} X$ in $D(\mathbb{R}^d)$ endowed with the multivariate Skorokhod-topology, then
\begin{equation} \label{compact}
 \limsup_{n \rightarrow \infty} \mathbb{P}(\xi_n \in K) \le \mu(K) \quad \text{ for all compact } K \subseteq \mathbb{R}^d,
\end{equation}
where $\mu(K)=\mathbb{P}(A(X) \cap K \neq \emptyset)$ and $A(X)$ is equal to the set of all minimizing points of the process $X$.
The set-function $\mu$ is called \emph{capacity functional} of the random closed set $A(X)$. In particularly, it is a \emph{Choquet-capacity}.
Now, to every Choquet-capacity $\mu$ there exists a random closed set $C$ such that $\mu(K)=\mathbb{P}(C \cap K \neq \emptyset)$ for all compact $K$. Further, every Choquet-capacity can be extended to the Borel-$\sigma$ algebra $\mathcal{B}(\mathbb{R}^d)$ on $\mathbb{R}^d$ such that $\mu(B)=\mathbb{P}(C \cap B \neq \emptyset)$ for all Borel-sets $B$, where $\{C \cap B \neq \emptyset \} \in \mathcal{A}$. The extension in general is not a probability measure, because it lacks $\sigma$-additivity. For facts on random closed sets and Choquet-capacities we refer to Molchanov (2017) \cite{Molchanov2017}. By Proposition 3.3 in Ferger \cite{ferger2015arginf} $\mu$ is a probability measure if and only if the corresponding random closed set $C$ is equal to a singleton $\{\xi\}$ almost surely for some random variable $\xi$.

Notice that for (\ref{compact}) beside $X_n \stackrel{\mathcal{D}}{\rightarrow} X$  no further requirement is needed.
However, if in addition $(\xi_n)$ is stochastically bounded, then (\ref{compact}) holds even for all closed sets:
\begin{equation} \label{closed}
 \limsup_{n \rightarrow \infty} \mathbb{P}(\xi_n \in F) \le \mu(F) \quad \text{ for all closed } F \subseteq \mathbb{R}^d.
\end{equation}
Since (\ref{closed}) formally looks exactly like the characterisation of weak convergence in the Portmanteau-Theorem and moreover $\mu$ uniquely corresponds to $A(X)$, we say that the sequence $(\xi_n)$ of \textbf{points} converges in distribution to the \textbf{set} $A(X)$ of all minimizing points. Finally,
if $\mu$ is a probability measure, then as we have seen above $A(X)=\{\xi\}$ a.s., whence
$$
\mu(B)=\mathbb{P}(A(X)\cap B \neq \emptyset)=\mathbb{P}(\{\xi\} \cap B \neq \emptyset)=\mathbb{P}(\xi \in B) \quad \forall\; B \in \mathcal{B}(\mathbb{R}^d).
$$
Consequently, (\ref{closed}) shows in this case that
\begin{equation} \label{dconv}
\xi_n \stackrel{\mathcal{D}}{\rightarrow} \xi \quad \text{in } \mathbb{R}^d.
\end{equation}
Notice that $A(X)=\{\xi\}$ a.s. means exactly that $X$ has a unique minimizing point (namely $\xi$) with probability one.

Actually, in Ferger \cite{ferger2015arginf} we consider more generally non-empty closed sets $\varphi_n$, which are a.s. subsets of all minimizing points $A(X_n)$ of $X_n$. Then given $X_n \stackrel{\mathcal{D}}{\rightarrow} X$ in $D(\mathbb{R}^d)$ it is shown that
\begin{equation} \label{dconvupperFell}
 \varphi_n \stackrel{\mathcal{D}}{\rightarrow} A(X) \quad \text{in } (\mathcal{F},\tau_{\text{uF}}),
\end{equation}
where $\mathcal{F}$ is the family of all closed subsets of $\mathbb{R}^d$ and $\tau_{\text{uF}}$ is the upper Fell-topology on $\mathcal{F}.$ It follows from the definition of $\tau_{\text{uF}}$ (see the next section) in combination with the Portmanteau-Theorem that (\ref{dconvupperFell}) is equivalent to
\begin{equation} \label{compactmissingsets}
 \limsup_{n \rightarrow \infty}\mathbb{P}\Big(\bigcap_{K \in \mathcal{K}^*} \{\varphi_n \cap K \neq \emptyset\}\Big) \le \mathbb{P}\Big(\bigcap_{K \in \mathcal{K}^*} \{A(X) \cap K \neq \emptyset\}\Big) \quad \text{for all } \mathcal{K}^* \subseteq \mathcal{K},
\end{equation}
where $\mathcal{K}$ denotes the class of all compact subsets in $\mathbb{R}^d$.
If in addition the sequence of subsets $(\varphi_n)$ is \emph{stochastically bounded} in the sense that
\begin{equation} \label{asymptsubset}
 \lim_{k \rightarrow \infty} \limsup_{n \rightarrow \infty} \mathbb{P}(\varphi_n \nsubseteq [-k,k]^d)=0,
\end{equation}
then
\begin{equation} \label{dconvupperVietoris}
 \varphi_n \stackrel{\mathcal{D}}{\rightarrow} A(X) \quad \text{in } (\mathcal{F},\tau_{\text{uV}}),
\end{equation}
where $\tau_{\text{uV}}$ is the upper Vietoris-topology on $\mathcal{F}.$ This is the same as
\begin{equation} \label{closedmissingsets}
 \limsup_{n \rightarrow \infty}\mathbb{P}\Big(\bigcap_{F \in \mathcal{F}^*} \{\varphi_n \cap F \neq \emptyset\}\Big) \le \mathbb{P}\Big(\bigcap_{F \in \mathcal{F}^*} \{A(X) \cap F \neq \emptyset\}\Big) \quad \text{for all } \mathcal{F}^* \subseteq \mathcal{F}.
\end{equation}
Finally, if furthermore $A(X)=\{\xi\}$ a.s., then
\begin{equation} \label{dconvFell}
 \varphi_n \stackrel{\mathcal{D}}{\rightarrow} A(X) \quad \text{in } (\mathcal{F},\tau_F),
\end{equation}
where $\tau_F$ denotes the Fell-topology. When comparing the convergence results (\ref{dconvupperFell}), (\ref{dconvupperVietoris}) and (\ref{dconvFell}) notice that $\tau_{\text{uF}} \subseteq \tau_{\text{uV}}$ and $\tau_{uF} \subseteq \tau_F$, so that the statements become stronger each time.

Consider the special case that $\varphi_n=\{\xi_n\}$. Then the choice $\mathcal{K}^*=\{K\}$ in (\ref{compactmissingsets}) immediately gives (\ref{compact}). Similarly, $\mathcal{F}^*=\{F\}$ in (\ref{closedmissingsets}) directly yields (\ref{closed}).

In stochastic optimisation it is convenient to work with the space $\textbf{F}= S(H)$ of  all lower semicontinuous functions $f:H \rightarrow \overline{\mathbb{R}}$ equipped with the epi-topology $\mathcal{T}_e$, confer e.g. Salinetti and Wets (1986) \cite{salinetti1986convergence}, Pflug (1992, 1995) \cite{Pflug1992, pflug1995asymptotic}, Vogel (2005, 2006) \cite{vogel2005qualitative,vogel2006semiconvergence} or Gersch (2006) \cite{gersch2006convergence}.  For $\epsilon \ge 0$ let $A(f,\epsilon)$ be the set of all $\epsilon$-optimal solutions. Assume that
\begin{equation} \label{dconvinSH}
 X_n \stackrel{\mathcal{D}}{\rightarrow} X \quad \text{in } (S(H),\mathcal{T}_e),
\end{equation}
where $H$ is a locally compact second countable Hausdorff-space. If moreover
\begin{equation} \label{epsilonconverges}
\epsilon_n \stackrel{\mathcal{D}}{\rightarrow} \epsilon  \text{ with } \epsilon \text{ is constant a.s.},
\end{equation}
then by Theorem 5 in Ferger (2025) {\cite{ferger2025epi}
\begin{equation} \label{optimalsolutionsinupperFell}
 A(X_n,\epsilon_n) \stackrel{\mathcal{D}}{\rightarrow} A(X,\epsilon) \quad \text{in } (\mathcal{F}, \tau_{\text{uF}})
\end{equation}
with $\mathcal{F}$ is equal to the family of all closed subsets in $H$. Next, suppose for every $\eta >0$ there exists a compact $K \subseteq H$ such that
$$
 \limsup_{n \rightarrow \infty}\mathbb{P}(A(X_n,\epsilon_n) \nsubseteq K) \le \eta.
$$
Then it follows from (\ref{optimalsolutionsinupperFell}) and Corollary 2.2 in Ferger (2024) \cite{ferger2024weak} that
\begin{equation} \label{optimalsolutionsinupperVietoris}
 A(X_n,\epsilon_n) \stackrel{\mathcal{D}}{\rightarrow} A(X,\epsilon) \quad \text{in } (\mathcal{F}, \tau_{\text{uV}}).
\end{equation}
By Proposition 2.1 and Remark 2.3 in Ferger \cite{ferger2024weak} we know that
(\ref{optimalsolutionsinupperFell}) and (\ref{optimalsolutionsinupperVietoris}) also hold for non-empty subsets $\varphi_n \subseteq A(X_n,\epsilon_n)$ and that the equivalent characterisations (\ref{compactmissingsets}) and (\ref{closedmissingsets}) are valid analogously.

If (\ref{epsilonconverges}) holds with $\epsilon =0$ and  $X$ has at most one minimizing point $\xi$ a.s., then by Theorem 6 and Remark 5 in Ferger \cite{ferger2025epi}
\begin{equation} \label{optimalsolutionsinFell}
 A(X_n,\epsilon_n) \stackrel{\mathcal{D}}{\rightarrow} \{\xi\} \quad \text{in } (\mathcal{F}, \tau_{F}).
\end{equation}
Clearly, for the constant sequence $(\epsilon_n)\equiv 0$, i.e. $A(X_n,\epsilon_n)=A(X_n)$ for all $n \in \mathbb{N}$, the requirement (\ref{epsilonconverges}) is automatically fulfilled.

As to $\epsilon_n$-optimal solutions $\xi_n$ the special choice $\varphi_n:=\{\xi_n\}$ yields that (\ref{dconvinSH}) and (\ref{epsilonconverges}) entail
\begin{equation} \label{compactH}
 \limsup_{n \rightarrow \infty} \mathbb{P}(\xi_n \in K) \le \mu_\epsilon(K) \quad \text{ for all compact } K \subseteq H,
\end{equation}
where $\mu_\epsilon$ is the capacity-functional of $A(X,\epsilon)$, and
\begin{equation} \label{closedH}
 \limsup_{n \rightarrow \infty} \mathbb{P}(\xi_n \in F) \le \mu_\epsilon(F) \quad \text{ for all closed } F \subseteq H,
\end{equation}
provided $(\xi_n)$ is uniformly tight, i.e. (\ref{uniformtightness}) holds without asterisks. If in addition $\epsilon=0$ and
$A(X) \subseteq \{\xi\}$ a.s., then
\begin{equation} \label{dconvinH}
\xi_n \stackrel{\mathcal{D}}{\rightarrow} \xi \text{ in } H.
\end{equation}
For the last two statements confer Theorem 7 in Ferger \cite{ferger2025epi}.

Back to $H=\mathbb{R}^d$. Here, the situation becomes very easy when the stochastic processes $X_n$ and $X$ are not only lower semicontinuous, but also convex. Indeed, Ferger (2021) \cite{ferger2021continuous} shows that then the basic requirement (\ref{dconvinSH}) can be replaced by the much weaker assumption
\begin{equation} \label{fidis}
 (X_n(t_1),\ldots,X_n(t_k)) \stackrel{\mathcal{D}}{\rightarrow}  (X(t_1),\ldots,X(t_k)) \quad \text{in } \overline{\mathbb{R}}^k
\end{equation}
for all $t_1,\ldots,t_k \in D$, where $D$ is a dense subset of $\mathbb{R}^d$ (\emph{convergence of the finite dimensional distributions on $D$}).
But not only this! Also, any condition of stochastic boundedness can be omitted. So, if only  (\ref{fidis}) and (\ref{epsilonconverges}) hold, then (\ref{optimalsolutionsinupperVietoris})-(\ref{dconvinH}) follow. In particular,
we obtain as special cases earlier results by Geyer (1996) \cite{geyer1996asymptotics}, Davis, Knight and Liu (1992) \cite{davis1992m} and Hjort and Pollard (2011) \cite{hjort3806asymptotic}.

Moreover, by Theorem 1.4 in Ferger \cite{ferger2021continuous} there exists a stronger version of (\ref{optimalsolutionsinFell}):
\begin{equation} \label{optimalsolutionsinVietoris}
 A(X_n,\epsilon_n) \rightsquigarrow \{\xi\} \quad \text{in } (\mathcal{F}, \tau_{V}).
\end{equation}
Here, $\tau_V$ is the Vietoris topology on $\mathcal{F}$, which is the strongest one among all other hyperspace topologies occurring so far:
$\tau_V \supseteq \tau_{\text{uV}}$ and $\tau_V \supseteq \tau_F \supseteq \tau_{\text{uF}}$.\\

In this paper, we consider more generally a multidimensional vector $(\xi_{\alpha,1},\allowbreak \ldots,\allowbreak \xi_{\alpha,k},\allowbreak \sigma_\alpha)$ consisting of
a.s.\ minimizing points $\xi_{\alpha,j}$ of $X_\alpha^{(j)}, 1 \leq j \leq k$. Here, each $X_\alpha^{(j)}$ is a multivariate c\`{a}dl\`{a}g stochastic process
with trajectories in $D(\mathbb{R}^{d_j}), \; d_j \in \mathbb{N}$. Moreover, $\sigma_\alpha$ is a random variable in some metric space $S$ and the index $\alpha$ runs through a directed set $(I,\le)$. Thus, $(\xi_{\alpha,1},\ldots,\xi_{\alpha,k},\sigma_\alpha)_{\alpha \in I}$ is a net in the product space $H=\mathbb{R}^{d_1} \times \cdots \times \mathbb{R}^{d_k} \times S$. Suppose that each $(X_\alpha^{(j)})_{\alpha \in I}$ is tight and that
$(X_\alpha^{(1)},\ldots,X_\alpha^{(k)},\sigma_\alpha) \rightarrow_{\text{fd}} (X^{(1)},\ldots,X^{(k)},\sigma)$, where $\rightarrow_{\text{fd}}$ means convergence of the finite-dimensional distributions. If in addition $(\xi_{\alpha,1},\ldots,\xi_{\alpha,k})_{\alpha \in I}$ is stochastically bounded, then we show in Theorem \ref{ExtendedArginfTheorem} that:
\begin{equation} \label{limsupmu}
\limsup_{\alpha} \mathbb{P} \left( \xi_{\alpha,1} \in F_1, \hdots, \xi_{\alpha,k} \in F_k, \sigma_{\alpha} \in B \right) \le \mu(F_1 \times \cdots \times F_k \times B)
\end{equation}
for all closed $F_j \subseteq \mathbb{R}^{d_j}, 1 \le j \le k,$ and all $\sigma$-continuity-sets $B \subseteq S$. Here, $\mu$ is the capacity-functional
of the random closed set $$C:= A(X^{(1)}) \times \cdots \times A(X^{(k)}) \times \{\sigma\}.$$ That $C$ in fact is a random closed set in
$H= \mathbb{R}^{d_1} \times \cdots \times \mathbb{R}^{d_k} \times S$ endowed with the product-topology follows from Theorem 1.3.25 in Molchanov \cite{Molchanov2017}. Recall that $\mu(E)=\mathbb{P}(C \cap E \neq \emptyset)$ for all Borel-sets $E$ in $H$. Moreover, it is also proved in Theorem \ref{ExtendedArginfTheorem} that
\begin{equation} \label{liminfnu}
\liminf_{\alpha} \mathbb{P} \left( \xi_{\alpha,1} \in G_1, \hdots, \xi_{\alpha,k} \in G_k, \sigma_{\alpha} \in B \right) \ge \nu(G_1 \times \cdots \times G_k \times B)
\end{equation}
for all open $G_j \subseteq \mathbb{R}^{d_j}, 1 \le j \le k,$ and all $\sigma$-continuity-sets $B \subseteq S$. Here, $\nu$ is the \emph{containment-functional} of $C$, i.e. $\nu(E):=\mathbb{P}(C \subseteq E)$. It is related to $\mu$ by $\nu(E)=1-\mu(E^C)$ with $E^C:= H \setminus E$ the complement of $E$ in $H$. Moreover, one sees immediately that
$\nu \le \mu$ (provided $C$ is a.s. non-empty as it is in our case.) Notice that (\ref{liminfnu}) does not follow from (\ref{limsupmu}) by complementation.
Comparing  (\ref{limsupmu}) and (\ref{liminfnu}) with the limsup- and liminf-characterization in the Portmanteau -Theorem makes us to say that
the points $(\xi_{\alpha,1},\ldots,\xi_{\alpha,k},\sigma_\alpha)$, $\alpha \in I$, converge in distribution to the set $C$, formally written as:
\begin{equation} \label{convergencetoaset}
 (\xi_{\alpha,1},\ldots,\xi_{\alpha,k},\sigma_\alpha) \stackrel{\mathcal{D}}{\rightarrow} A(X^{(1)}) \times \cdots \times A(X^{(k)}) \times \{\sigma\}.
\end{equation}
If $\xi_{\text{min}}^{(j)}:=\text{sargmax}(X^{(j)})$ and $\xi_{\text{max}}^{(j)}:=\text{largmax}(X^{(j)})$ are the smallest and largest minimizer of $X^{(j)}, 1 \le j \le k$, then in Corollary \ref{CorollaryUnivariateExtendedArginf} we obtain as a special case that
\begin{equation} \label{limsupximin}
\limsup_{\alpha} \mathbb{P} \left( \xi_{\alpha,j} \leq x_j, 1 \le j \le k, \sigma_{\alpha} \in B \right) \leq \mathbb{P} \left( \xi_{\text{\text{min}}}^{(j)} \leq x_j, 1 \le j \le k, \sigma \in B \right),
\end{equation}
and
\begin{equation} \label{liminfximax}
\liminf_{\alpha} \mathbb{P} \left( \xi_{\alpha,j} < x_j, 1 \le j \le k, \sigma_{\alpha} \in B \right) \geq \mathbb{P} \left( \xi_{\text{max}}^{(j)} < x_j, 1 \le j \le k, \sigma \in B \right),
\end{equation}
for all $x_j \in \mathbb{R}^{d_j}$, $1 \leq j \leq k$ and for all $\sigma$-continuity sets $B$. Here, the relations $\le$ and $<$ in the multi-dimensional euclidean space are defined componentwise.


Finally, if each process $X^{(j)}$ possesses a.s. an unique minimizing point $\xi_j$, then the limit-set in (\ref{convergencetoaset}) shrinks to the singleton $\{\xi_1\} \times \cdots \times \{\xi_k\} \times \{\sigma\}$ and as a consequence usual convergence in distribution follows, see Corollary \ref{CorollaryExtendedArginf}:
$$
 (\xi_{\alpha,1},\ldots,\xi_{\alpha,k},\sigma_\alpha) \stackrel{\mathcal{D}}{\rightarrow} (\xi_1,\cdots, \xi_k, \sigma) \text{ in } \mathbb{R}^{d_1}\times \cdots \times \mathbb{R}^{d_k} \times S.
$$

The adjunction of $\sigma_\alpha$ in all of our findings might seem artificial. However, in section 3 we show how useful this is
in regression analysis. On the other hand, if in fact $\sigma_\alpha$ is omitted, then we obtain the corresponding results for
$(\xi_{\alpha,1},\ldots,\xi_{\alpha,k})$ under a weaker assumption, confer Remark \ref{withoutsigma}.

\section{Extended Argmin-Theorems} \label{SectionExtendedArginf}
For a natural number $d$ let $X=\{X(t): t \in \mathbb{R}^d\}$ be a real-valued stochastic process defined on some probability space $(\Omega,\mathcal{A},\mathbb{P})$ with trajectories in the \emph{multivariate Skorokhod-space} $D=D(\mathbb{R}^d)$. For the definition of $D$ we consider $d$-tuples $R=(R_1,\ldots,R_d) \in \{<,\ge\}^d$ with the usual relations $<$ and $\ge$ in $\mathbb{R}$. If $t=(t_1,\ldots,t_d) \in \mathbb{R}^d$ is a point in the euclidean space, then
\begin{align*}
 Q_R := Q_R(t) := \{s \in \mathbb{R}^d: s_i R_i t_i, 1 \le i \le d \}
\end{align*}
is the \emph{R-quadrant of t}. Given a function $f:\mathbb{R}^d \rightarrow \mathbb{R}$ the quantity
\begin{align*}
 f(t+R) := \lim_{s\rightarrow t, s \in Q_R(t)} f(s)
\end{align*}
is called the \emph{R-quadrant-limit of f at t}. Then $D$ consists of all functions $f$ such that for each $t \in \mathbb{R}^d$
\begin{enumerate} [(a)]
\renewcommand{\theenumi}{\alph{enumi}}
\renewcommand{\labelenumi}{\theenumi)}
\item \label{Lag} $f(t+R)$  exists for all $R \in \{<, \ge\}^d$\;,
\item \label{Cad} $f(t+R) = f(t)$ for  $R=(\ge,\ldots,\ge)$.
\end{enumerate}
Relations \eqref{Lag} and \eqref{Cad} extend the notions ,,limits from below'' and ,,continuous from above'' from the univariate case ($d=1$) to the multivariate one. Therefore it is convenient to call $f \in D$ a \emph{c\`{a}dl\`{a}g function} (continue \`{a} droite limite \`{a} gauche). $D$ endowed with the \emph{Skorokhod-metric }$s$ is a complete separable metric space, confer Lagodowski and Rychlik (1986) \cite{Lagodowski1986}, p. 332. The pertaining Borel-$\sigma$-algebra
$\mathcal{D}$ is generated by the sets of all cylinders, confer Theorem 2 of Lagodowski and Rychlik \cite{Lagodowski1986}. Therefore, $X$ can be identified with a random element $X:(\Omega,\mathcal{A})\rightarrow (D,\mathcal{D})$.\\
\, \\
Let
\begin{equation}
 \mbox{Argmin}(f) \equiv A(f) := \{ t \in \mathbb{R}^d: \min_{R \in \{<,\ge\}^d} f(t+R)=\inf_{s \in \mathbb{R}^d} f(s) \}, \quad f \in D, \label{arginf}
\end{equation}
be the set of all \emph{minimizing points} of $f$. It should be noted that we call a point $t \in A(f)$ \textbf{minimizing} point even though the function $f$ in general does not attain its minimal value at that point. One reason for this is that the pertaining lower semicontinuous regularisation $\bar{f}$ of $f$ is minimized at each $t \in A(f)$ and conversely every (proper) minimizer $t$ of $\bar{f}$ lies in $A(f)$, confer Lemma 2.2 in Ferger \cite{ferger2015arginf}. Therefore, $A(f)$ is a closed subset of $\mathbb{R}^d$ (possibly empty). So, if the functional $A:D \rightarrow \mathcal{F}$ is applied to
$X$ one obtains a map from $\Omega$ into the family $\mathcal{F}$ of all closed subsets of $\mathbb{R}^d$ including the empty set $\emptyset$:
$A(X)= A \circ X: \Omega \rightarrow \mathcal{F}.$ We endow $\mathcal{F}$ with a topology suitable for our purposes. To this end introduce for every subset $C \subseteq \mathbb{R}^d$ the system
$\mathcal{M}(C):=\{F \in \mathcal{F}: F \cap C = \emptyset\}$ of all \emph{missing sets} of $C$
and for later use
$\mathcal{H}(C):=\{F \in \mathcal{F}: F \cap C \neq \emptyset\}$ of all \emph{hitting sets} of $C$.
Put
$$
\mathcal{S}:= \{\mathcal{M}(K): K \in \mathcal{K}\}  \subseteq 2^\mathcal{F}.
$$
Then the topology  on $\mathcal{F}$ generated by $\mathcal{S}$ is called \emph{upper Fell-topology} and denoted by $\tau_{\text{uF}}$.
It induces the pertaining Borel-$\sigma$ algebra $\mathcal{B}_{\text{uF}} := \sigma(\tau_{\text{uF}})$. Proposition 2.7 in Ferger \cite{ferger2015arginf} shows that
$A(X)$ is $\mathcal{A}-\mathcal{B}_{\text{uF}}$ measurable. Such maps are called \emph{random closed set} in $\mathbb{R}^d$.
If $A(X)$ is non-empty a.s., then the \emph{Fundamental selection theorem}, confer Molchanov \cite{Molchanov2017}, guarantees the existence of a Borel-measurable map
$\xi:(\Omega,\mathcal{A}) \rightarrow \mathbb{R}^d$ such that
$\xi \in A(X)$ a.s. This random variable $\xi$ is called a \emph{measurable selection} of $A(X)$. In particular, $\xi$ is a minimizing point of the stochastic process $X$. \\
\, \\
For every $t \in \mathbb{R}^d$ the projection (evaluation map) $\pi_t: D \rightarrow \mathbb{R}$ is defined by $\pi_t(f) := f(t)$. If $T = \{ t_1, \hdots, t_k \} \subseteq \mathbb{R}^d$, then $\pi_T := (\pi_{t_1}, \hdots, \pi_{t_k})$. Finally, we introduce
\begin{align*}
T_X :=  \left\{ t \in \mathbb{R}^d :  \pi_t \text{ is continuous at } X \text{ a.s.} \right\}.
\end{align*}
To emphasize the dependence on the dimension $d$ we write $D_d, \mathcal{D}_d$ and $\mathcal{F}_d$ for $D, \mathcal{D}$ and $\mathcal{F}$. But for notational convenience, we avoid the index for $A$ and $\tau_{\text{uF}}$, the readers should keep in mind that they depend on $d$ as well.\\

\noindent
Our first result is the starting point for the Extended-Argmin Theorem. It is a generalisation of Theorem 5.1 in Ferger (2010) \cite{Ferger2010}.

\begin{proposition} \label{PropositionCriteriaForJointWeakConvInProdSpaceForD(Rd)}
For finitely many natural numbers $d_1,\hdots,d_k$ let
\begin{itemize}
\item $(X_{\alpha}^{(j)})$ be a net of random variables in $D_{d_j}$ for every $1 \leq j \leq k$,
\item $X^{(j)}$ a random variable in $D_{d_j}$ for every $1 \le j \le k$,
\item $(\sigma_{\alpha})$ a net of random variables in a separable and complete metric space $S$ with Borel-$\sigma$-algebra $\mathcal{B}(S)$,
\item $\sigma$ a random variable in $S$.
\end{itemize}
Suppose the nets meet the conditions
\begin{enumerate} [(i)]
\item \label{Cond1PropositionCriteriaForJointWeakConvInProdSpaceForD(Rd)} $(X_\alpha^{(j)})_{\alpha \in I}$ is tight for each fixed $1 \le j \le k.$
\item \label{Cond2PropositionCriteriaForJointWeakConvInProdSpaceForD(Rd)}
\begin{align*}
&\left( \pi_{T_1} \left( X_{\alpha}^{(1)} \right),\hdots, \pi_{T_k} \left( X_{\alpha}^{(k)} \right), \sigma_{\alpha} \right)  \xrightarrow{\mathcal{L}} \left( \pi_{T_1} \left( X^{(1)} \right), \hdots, \pi_{T_k} \left( X^{(k)} \right), \sigma \right)
\end{align*}
in  $\mathbb{R}^{\vert T_1 \vert} \times \hdots \times \mathbb{R}^{\vert T_k \vert} \times S$ for all finite $T_j \subseteq T_{X^{(j)}}$, $1 \leq j \leq k$.
\end{enumerate}
Then
\begin{align*}
\left( X_{\alpha}^{(1)}, \hdots, X_{\alpha}^{(k)}, \sigma_{\alpha} \right) \xrightarrow{\mathcal{L}} \left( X^{(1)}, \hdots, X^{(k)}, \sigma \right) \text{ in } D_{d_1} \times \hdots \times D_{d_k} \times S.
\end{align*}
\end{proposition}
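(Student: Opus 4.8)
The plan is to avoid establishing tightness of the full product net and running a Prokhorov/subnet argument, and instead to exhibit $(X_\alpha^{(1)},\ldots,X_\alpha^{(k)},\sigma_\alpha)$ as a ``converging--together'' perturbation of finite--dimensional objects to which hypothesis (ii) applies directly. Write $\mathbf X_\alpha:=(X_\alpha^{(1)},\ldots,X_\alpha^{(k)})$, $\mathbf X:=(X^{(1)},\ldots,X^{(k)})$ and $\mathbf D:=D_{d_1}\times\cdots\times D_{d_k}$, which is a separable complete metric space; the goal is $(\mathbf X_\alpha,\sigma_\alpha)\xrightarrow{\mathcal L}(\mathbf X,\sigma)$ in $\mathbf D\times S$.

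For every $m\in\mathbb N$ and every $1\le j\le k$ choose a finite grid $G^{(j)}_m\subseteq T_{X^{(j)}}$ with $G^{(j)}_m\subseteq G^{(j)}_{m+1}$ and mesh tending to $0$; this is possible because $T_{X^{(j)}}$ is dense in $\mathbb R^{d_j}$. Let $\Pi^{(j)}_m\colon D_{d_j}\to D_{d_j}$ be the step interpolation that replaces $f$ by the piecewise constant function agreeing with $f$ on $G^{(j)}_m$, and set $\Pi_m:=(\Pi^{(1)}_m,\ldots,\Pi^{(k)}_m)\colon\mathbf D\to\mathbf D$. Then $\Pi^{(j)}_m=r^{(j)}_m\circ\pi_{G^{(j)}_m}$, where the reconstruction $r^{(j)}_m\colon\mathbb R^{\,|G^{(j)}_m|}\to D_{d_j}$ of a step function from its grid values is continuous (a small change of the grid values produces a small sup--norm, hence Skorokhod, change). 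Since $G^{(j)}_m\subseteq T_{X^{(j)}}$, hypothesis (ii) and the continuous mapping theorem give, for each fixed $m$,
\begin{equation*}
(\Pi_m\mathbf X_\alpha,\sigma_\alpha)\ \xrightarrow{\mathcal L}\ (\Pi_m\mathbf X,\sigma)\qquad\text{in }\mathbf D\times S .
\end{equation*}
Next, $\Pi^{(j)}_m f\to f$ in the Skorokhod metric $s$ for every $f\in D_{d_j}$ (classical for $d_j=1$; in general it follows by unwinding the Lagodowski--Rychlik construction \cite{Lagodowski1986}), so $\Pi_m\mathbf X\to\mathbf X$ surely, whence $(\Pi_m\mathbf X,\sigma)\xrightarrow{\mathcal L}(\mathbf X,\sigma)$ as $m\to\infty$. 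Finally, the tightness of $(X^{(j)}_\alpha)$ from (i), combined with $\sup_{f\in K}s(\Pi^{(j)}_m f,f)\to0$ for every compact $K\subseteq D_{d_j}$ (the multivariate form of the classical uniform--interpolation estimate), yields, for a max--metric $d_{\mathbf D}$ on $\mathbf D$,
\begin{equation*}
\lim_{m\to\infty}\ \limsup_\alpha\ \mathbb P\big(d_{\mathbf D}(\Pi_m\mathbf X_\alpha,\mathbf X_\alpha)>\eta\big)=0\qquad\text{for every }\eta>0 .
\end{equation*}
These three statements are exactly the hypotheses of the converging--together theorem (Billingsley \cite{Billingsley1999}, Theorem 3.2; its proof uses only the Portmanteau inequality $\limsup_\alpha\mathbb P(Z_\alpha\in F)\le\mathbb P(Z\in F)$ for closed $F$ and is therefore valid for nets), applied with $Y_{m,\alpha}=(\Pi_m\mathbf X_\alpha,\sigma_\alpha)$, $Y_m=(\Pi_m\mathbf X,\sigma)$, $Y=(\mathbf X,\sigma)$, $Z_\alpha=(\mathbf X_\alpha,\sigma_\alpha)$ and the max--metric on $\mathbf D\times S$, so that the perturbation is carried entirely by the c\`adl\`ag coordinates. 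This gives $(\mathbf X_\alpha,\sigma_\alpha)\xrightarrow{\mathcal L}(\mathbf X,\sigma)$, which is the assertion.

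The main work, and the step I expect to be most delicate, is the bookkeeping in the multivariate Skorokhod space: one must check that a finite--grid step interpolation of $f\in D(\mathbb R^{d_j})$, with grid inside $T_{X^{(j)}}$, converges to $f$ and does so uniformly on compacta, and that the reconstruction map into $D_{d_j}$ is continuous --- all routine for $d_j=1$ via Billingsley \cite{Billingsley1999} but needing the Lagodowski--Rychlik metric \cite{Lagodowski1986} in general. The reason for taking this route rather than ``tightness plus Prokhorov'' is that $\sigma_\alpha$ is assumed only to converge in law and need not be tight (a weakly convergent net of laws on a Polish space need not be uniformly tight), so $(\mathbf X_\alpha,\sigma_\alpha)$ need not be a relatively compact net and a subnet argument on $\mathbf D\times S$ is unavailable; in the approximation argument $\sigma_\alpha$ is never approximated --- it occurs unchanged in $Y_{m,\alpha}$ and in $Z_\alpha$ --- so only the tight coordinates $X^{(j)}_\alpha$ enter the error term. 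If $\sigma_\alpha$ is suppressed the same scheme applies, which is the setting of Remark \ref{withoutsigma}, and for $k=1$ it recovers Theorem 5.1 of Ferger \cite{Ferger2010}.
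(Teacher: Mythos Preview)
Your approach is genuinely different from the paper's. The paper argues via Prokhorov and subnets: from (ii) it deduces $\sigma_\alpha\xrightarrow{\mathcal L}\sigma$, claims this makes $(\sigma_\alpha)$ tight, combines with (i) and Tikhonov for joint tightness, and then along every subnet extracts a convergent sub-subnet whose limit $(X'^{(1)},\ldots,X'^{(k)},\sigma')$ is identified with $(X^{(1)},\ldots,X^{(k)},\sigma)$ by a separating-class argument (finite-dimensional cylinders over the dense set $T_{X^{(j)}}\cap T_{X'^{(j)}}$ generate $\mathcal D_{d_j}$, via Lagodowski--Rychlik). Your converging-together scheme trades this for quantitative approximation in $D(\mathbb R^{d_j})$. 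What your route buys is precisely what you say: robustness against the failure, for genuine nets, of ``weakly convergent $\Rightarrow$ tight'' on Polish spaces (on $\ell^2$, with $I=\mathbb N\times\mathbb N$ ordered by the first coordinate only, the net $P_{(m,n)}=\delta_{m^{-1}e_n}$ converges to $\delta_0$ but is not even asymptotically tight), so the paper's tightness step for $(\sigma_\alpha)$ is indeed unjustified for general directed index sets, though correct for sequences. What the paper's route buys is that, granted joint tightness, no hard analysis in $D(\mathbb R^{d_j})$ is needed beyond the cylinder-generation fact.

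The interpolation claims are where your argument remains incomplete. For $d_j=1$ on a compact interval the uniform bound $\sup_{f\in K}s(\Pi_m f,f)\to 0$ can be established: from $\sup_{f\in K}w'_f(\delta)\to 0$ one obtains for each $f\in K$ a $\delta$-sparse partition with small oscillation on each piece, and a piecewise-linear time change sending the partition points to nearby grid points controls both the time distortion and the sup-error uniformly over $K$. So ``routine'' is fair there. For $d_j\ge 2$, however, the analogous statement with the Lagodowski--Rychlik metric and product time-changes is not available off the shelf and must be worked out; this is the real content of your proof and, as you yourself flag, still has to be supplied. One further remark: $\Pi_m^{(j)}=r_m^{(j)}\circ\pi_{G_m^{(j)}}$ is not continuous on all of $D_{d_j}$, since $\pi_t$ is discontinuous at functions jumping at $t$; this does not break your CMT step (you only need continuity of $r_m^{(j)}$ applied after the finite-dimensional convergence from (ii)), but it does mean there is no soft shortcut to the uniform-on-compacta estimate, which must be done by hand.
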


\begin{proof}
For notational convenience let $k = 2$. We write short $X_{\alpha}$, $X$, $Y_{\alpha}$, $Y$ for $X_{\alpha}^{(1)}$, $X^{(1)}$, $X_{\alpha}^{(2)}$, $X^{(2)}$, respectively. It follows from \eqref{Cond2PropositionCriteriaForJointWeakConvInProdSpaceForD(Rd)} and the Continuous Mapping Theorem (CMT) that $\sigma_{\alpha} \xrightarrow{\mathcal{L}} \sigma$ in $S$, whence $(\sigma_{\alpha})$ in particularly is relatively compact and thus $(\sigma_{\alpha})$ is tight by Prokhorov's theorem. Using \eqref{Cond1PropositionCriteriaForJointWeakConvInProdSpaceForD(Rd)} and Tikhorov's theorem we see that $(X_{\alpha}, Y_{\alpha}, \sigma_{\alpha})$ is tight and by another application of Prokhorov's theorem it is relatively compact. So, if $(\alpha')$ is an arbitrary subnet of $(\alpha)$ there exists a subnet $(\alpha'')$ of $(\alpha')$ such that
\begin{align} \label{TempEq1PropositionCriteriaForJointWeakConvInProdSpaceForD(Rd)}
\left( X_{\alpha''}, Y_{\alpha''}, \sigma_{\alpha''} \right) \xrightarrow{\mathcal{L}} \left( X', Y', \sigma' \right) \text{ in } D_{d_1} \times D_{d_2} \times S,
\end{align}
where the limit in \eqref{TempEq1PropositionCriteriaForJointWeakConvInProdSpaceForD(Rd)} depends on $(\alpha')$. It follows from the CMT that
\begin{equation} \label{TempEq2PropositionCriteriaForJointWeakConvInProdSpaceForD(Rd)}
\begin{aligned}
&\left( \pi_T \left( X_{\alpha''} \right), \pi_U \left( Y_{\alpha''} \right), \sigma_{\alpha''} \right) \xrightarrow{\mathcal{L}} \left( \pi_T (X'), \pi_U (Y'), \sigma' \right) \text{ in } \mathbb{R}^{\vert T \vert} \times \mathbb{R}^{\vert U \vert} \times S
\end{aligned}
\end{equation}
for each finite $T \subseteq T_{X'}$ and $U \subseteq T_{Y'}$. To see that indeed the CMT can be applied, note that $T \subseteq T_{X'}$ implies that $\pi_T$ is continuous at $X'$ a.s. For the same reason $\pi_U$ is continuous at $Y'$ a.s. This enables the application of the CMT.
From \eqref{Cond2PropositionCriteriaForJointWeakConvInProdSpaceForD(Rd)} and another application of the CMT we obtain that
\begin{equation} \label{TempEq3PropositionCriteriaForJointWeakConvInProdSpaceForD(Rd)}
\begin{aligned}
&\left( \pi_T \left( X_{\alpha''} \right), \pi_U(Y_{\alpha''}), \sigma_{\alpha''} \right) \xrightarrow{\mathcal{L}} \left( \pi_T(X), \pi_U(Y), \sigma \right) \text{ in } \mathbb{R}^{\vert T \vert} \times \mathbb{R}^{\vert U \vert} \times S
\end{aligned}
\end{equation}
for each finite $T \subseteq T_{X}$ and $U \subseteq T_{Y}$. Put $T_0 := T_X \cap T_{X'}$, $U_0 := T_Y \cap T_{Y'}$,
\begin{align*}
\mathcal{F}_{T_0} := \left\lbrace \pi_T^{-1}(B) : B \in \mathcal{B} \left( \mathbb{R}^{\vert T \vert} \right), \, T \subseteq T_0, \, T \text{ finite} \right\rbrace,
\end{align*}
and
\begin{align*}
\mathcal{F}_{U_0} := \left\lbrace \pi_U^{-1}(B) : B \in \mathcal{B} \left( \mathbb{R}^{\vert T \vert} \right), \, U \subseteq U_0, \, U \text{ finite} \right\rbrace.
\end{align*}
Since $T_0 \subseteq \mathbb{R}^{d_1}$ and $U_0 \subseteq \mathbb{R}^{d_2}$ both are dense by Lemma \ref{Lemma1Appendix} (in the appendix), Theorem 2 in Lagodowski and Rychlik \cite{Lagodowski1986} ensures that $\mathcal{D}_{d_1} = \sigma (\mathcal{F}_{T_0})$ and $\mathcal{D}_{d_2} = \sigma (\mathcal{F}_{U_0})$. Consequently,
\begin{align*}
\mathcal{D}_{d_1} \otimes \mathcal{D}_{d_2} \otimes \mathcal{B}(S) = \sigma \left( \mathcal{F}_{T_0} \times \mathcal{F}_{U_0} \times \mathcal{B}(S) \right)
\end{align*}
by a result of measure theory, confer Theorem 22.1 in Bauer (2001) \cite{Bauer2001}. Further, $\mathcal{F}_{T_0}$, $\mathcal{F}_{U_0}$ and $\mathcal{B}(S)$ are $\pi$-systems and so is $\mathcal{F}_{T_0} \times \mathcal{F}_{U_0} \times \mathcal{B}(S)$. Thus, the latter is a separating class for $\mathcal{D}_{d_1} \otimes \mathcal{D}_{d_2} \otimes \mathcal{B}(S)$. Now, it follows from \eqref{TempEq2PropositionCriteriaForJointWeakConvInProdSpaceForD(Rd)} and \eqref{TempEq3PropositionCriteriaForJointWeakConvInProdSpaceForD(Rd)} that the distributions of $(X',Y',\sigma')$ and $(X,Y,\sigma)$ coincide on $\mathcal{F}_{T_0} \times \mathcal{F}_{U_0} \times \mathcal{B}(S)$ and therefore on $\mathcal{D}_{d_1} \otimes \mathcal{D}_{d_2} \otimes \mathcal{B}(S)$, i.e.
\begin{align*}
(X',Y',\sigma') \stackrel{\mathcal{L}}{=} (X,Y,\sigma).
\end{align*}
By the subnet-criterion for convergence in topological spaces the result follows. (Recall that convergence in distribution is equivalent to weak convergence of the involved distributions, which in turn is equivalent to convergence in the weak topology.)
\end{proof}

\begin{remark} \label{alldimensionsareequal}
If the dimensions are all equal ($d_1 = d_2 = \hdots = d_k = d$), then condition \eqref{Cond2PropositionCriteriaForJointWeakConvInProdSpaceForD(Rd)} can significantly be weakened to
\begin{enumerate} [(i)]
\renewcommand{\theenumi}{\roman{enumi}*}
\renewcommand{\labelenumi}{(\theenumi)}
\setcounter{enumi}{1}
\item \label{Cond2*PropositionCriteriaForJointWeakConvInProdSpaceForD(Rd)}
\begin{align*}
\left( \pi_T \left(X_{\alpha}^{(1)} \right), \hdots, \pi_T \left( X_{\alpha}^{(k)} \right), \sigma_{\alpha} \right) \xrightarrow{\mathcal{L}} \left(\pi_T \left(X^{(1)} \right), \hdots, \pi_T \left( X^{(k)} \right), \sigma \right)
\end{align*}
in $\mathbb{R}^{k \vert T \vert} \times S$ for all finite $T \subseteq T_{X^{(1)}} \cap \hdots \cap T_{X^{(k)}}$.
\end{enumerate}
So, in contrast to \eqref{Cond2PropositionCriteriaForJointWeakConvInProdSpaceForD(Rd)} the sets $T_1,\hdots,T_k$ are all the same. In application this reduces the amount of work drastically. The proof requires only a minor modification as follows: Again for simplicity, let $k = 2$. Introduce $\pi_{T,T}: D_d \times D_d \rightarrow \mathbb{R}^{2 \vert T \vert}$ given by $\pi_{T,T} (f,g) = (\pi_T(f),\pi_T(g))$. With the same arguments as in the proof of Lemma 5.6 in Ferger \cite{Ferger2010} one shows that for every dense $T_0 \subseteq \mathbb{R}^d$,
\begin{align*}
\left\lbrace \pi_{T,T}^{-1} (B) : B \in \mathcal{B} \left(\mathbb{R}^{2 \vert T \vert} \right), T \subseteq T_0, T \text{ finite} \right\rbrace
\end{align*}
is a separating class for $\mathcal{D}_d \otimes \mathcal{D}_d$. Now we leave it to the reader to modify the proof of Proposition \ref{PropositionCriteriaForJointWeakConvInProdSpaceForD(Rd)}.
\end{remark}

In our main result we use the notation $||\cdot||_\infty$ for the maximum-norm on the euclidian space.

\begin{theorem} \label{ExtendedArginfTheorem}
Assume that the conditions \eqref{Cond1PropositionCriteriaForJointWeakConvInProdSpaceForD(Rd)} and \eqref{Cond2PropositionCriteriaForJointWeakConvInProdSpaceForD(Rd)} of Proposition \ref{PropositionCriteriaForJointWeakConvInProdSpaceForD(Rd)} are fulfilled. (If all dimensions $d_1,\ldots,d_k$ are equal, then it suffices to require \eqref{Cond2*PropositionCriteriaForJointWeakConvInProdSpaceForD(Rd)}.)
For every $1 \le j \le k$ let $A(X_{\alpha}^{(j)}) \neq \emptyset$ a.s. and $\xi_{\alpha,j}$ be a measurable selection of $A(X_{\alpha}^{(j)})$. Put $\xi_{\alpha} := (\xi_{\alpha,1},\hdots, \xi_{\alpha,k})$. If
\begin{enumerate} [(i)]
\setcounter{enumi}{2}
\item \label{Cond3ExtendedArginfTheorem} $\begin{aligned}[t]
\lim_{a \rightarrow \infty} \limsup_{\alpha} \mathbb{P} \left( \Vert \xi_{\alpha} \Vert_{\infty} > a \right) = 0,
\end{aligned}$
\end{enumerate}
then
\begin{equation} \label{LimsupIneqExArginf}
\begin{aligned}
&\limsup_{\alpha} \mathbb{P} \left( \xi_{\alpha,1} \in F_1, \hdots, \xi_{\alpha,k} \in F_k, \sigma_{\alpha} \in B \right) \\
&\qquad \qquad \qquad \leq \mathbb{P} \left( A \big( X^{(1)} \big) \cap F_1 \neq \emptyset, \hdots, A \big(X^{(k)} \big) \cap F_k \neq \emptyset, \sigma \in B \right)
\end{aligned}
\end{equation}
for all closed sets $F_j \subseteq \mathbb{R}^{d_j}$ and for all Borel-sets $B \in \mathcal{B}(S)$ with $\mathbb{P} (\sigma \in \partial B) = 0$, i.e. $B$ is a $\sigma$-continuity set. Moreover,
\begin{equation} \label{LiminfIneqExArginf}
\begin{aligned}
&\liminf_{\alpha} \mathbb{P} \left( \xi_{\alpha,1} \in G_1, \hdots, \xi_{\alpha,k} \in G_k, \sigma_{\alpha} \in B  \right) \\
&\qquad \qquad \qquad \geq \mathbb{P} \left( A \big( X^{(1)} \big) \subseteq G_1, \hdots, A \big( X^{(k)} \big) \subseteq G_k, \sigma \in B \right),
\end{aligned}
\end{equation}
for all open sets $G_j \subseteq \mathbb{R}^{d_j}$  and for all $\sigma$-continuity sets $B$.
\end{theorem}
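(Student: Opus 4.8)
The plan is to deduce the theorem from the joint convergence in distribution supplied by Proposition~\ref{PropositionCriteriaForJointWeakConvInProdSpaceForD(Rd)} (in the equal-dimension case, via Remark~\ref{alldimensionsareequal}), namely
$Z_\alpha:=(X^{(1)}_\alpha,\ldots,X^{(k)}_\alpha,\sigma_\alpha)\xrightarrow{\mathcal L}Z:=(X^{(1)},\ldots,X^{(k)},\sigma)$ in the Polish space $\mathbf D:=D_{d_1}\times\cdots\times D_{d_k}\times S$, by means of the Portmanteau theorem for nets. Since the Argmin functional is not continuous, one cannot transport $Z_\alpha\to Z$ through $A$ coordinatewise; instead I would work with two \emph{scalar} value functionals on $D_d$ (here $d=d_j$): the global infimum $\mathbf m(f):=\inf_{s\in\mathbb R^{d}}f(s)$ and, for compact $K\subseteq\mathbb R^{d}$, the localized infimum $\mathbf m_K(f):=\min_{t\in K}\bar f(t)$, where $\bar f(t)=\min_{R\in\{<,\ge\}^{d}}f(t+R)$ is the lower semicontinuous regularization of $f$. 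Because $\bar f$ is l.s.c.\ (so the minimum over the compact $K$ is attained), $\inf\bar f=\inf f$, and $\mathbf m_K(f)\ge\mathbf m(f)$ always, one has for $\emptyset\neq K$
\[
\{A(f)\cap K\neq\emptyset\}=\{\mathbf m_K(f)\le\mathbf m(f)\}\qquad\text{and}\qquad\{A(f)\cap K=\emptyset\}=\{\mathbf m_K(f)>\mathbf m(f)\}.
\]
The essential input — established along the lines of Theorem~5.1 in Ferger~\cite{Ferger2010} and the arguments of Ferger~\cite{ferger2015arginf} — is that $\mathbf m\colon D_d\to\overline{\mathbb R}$ is upper semicontinuous and $\mathbf m_K\colon D_d\to\overline{\mathbb R}$ is lower semicontinuous for the Skorokhod topology (for $\mathbf m$, follow a point nearly attaining the infimum of $f$ along the time changes realizing Skorokhod convergence; for $\mathbf m_K$, combine $\bar f(t)=\liminf_{s\to t}f(s)$ with those time changes and the compactness of $K$).

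Granting this, I would prove the limsup inequality~\eqref{LimsupIneqExArginf} first for compact sets $K_j$ in place of $F_j$. As $\xi_{\alpha,j}\in A(X^{(j)}_\alpha)$, we get $\mathbb P(\xi_{\alpha,j}\in K_j\ \forall j,\ \sigma_\alpha\in B)\le\mathbb P(Z_\alpha\in C)$ with $C:=\{(f_1,\ldots,f_k,s):\mathbf m_{K_j}(f_j)\le\mathbf m(f_j)\ \forall j,\ s\in\overline B\}$, and $C$ is \emph{closed} in $\mathbf D$ since each $\mathbf m_{K_j}$ is l.s.c., each $\mathbf m$ is u.s.c., $\overline B$ is closed, and finite intersections and products of closed sets are closed. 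Portmanteau gives $\limsup_\alpha\mathbb P(Z_\alpha\in C)\le\mathbb P(Z\in C)$, and replacing $\overline B$ by $B$ costs nothing since $\mathbb P(\sigma\in\partial B)=0$; this yields the compact version. To reach arbitrary closed $F_j$, note $\{\xi_{\alpha,j}\in F_j\ \forall j\}\subseteq\{\xi_{\alpha,j}\in F_j\cap[-a,a]^{d_j}\ \forall j\}\cup\{\|\xi_\alpha\|_\infty>a\}$, apply the compact case with $K_j=F_j\cap[-a,a]^{d_j}$, control the overflow by hypothesis~\eqref{Cond3ExtendedArginfTheorem}, and let $a\to\infty$, using $\bigcap_j\{A(X^{(j)})\cap(F_j\cap[-a,a]^{d_j})\neq\emptyset\}\uparrow\bigcap_j\{A(X^{(j)})\cap F_j\neq\emptyset\}$ together with continuity of $\mathbb P$ from below.

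For the liminf inequality~\eqref{LiminfIneqExArginf} I would argue dually, but keep the boundedness event inside the probability. Write $H_a:=[-a,a]^{d_j}$. On $\{\|\xi_\alpha\|_\infty\le a\}$ one has $\xi_{\alpha,j}\in A(X^{(j)}_\alpha)\cap H_a$, so if moreover $A(X^{(j)}_\alpha)\cap H_a\cap G_j^c=\emptyset$ for all $j$ and $\sigma_\alpha\in B$, then $\xi_{\alpha,j}\in G_j$ for all $j$. Hence
\[
\mathbb P(\xi_{\alpha,j}\in G_j\ \forall j,\ \sigma_\alpha\in B)\ \ge\ \mathbb P(Z_\alpha\in O_a)-\mathbb P(\|\xi_\alpha\|_\infty>a),
\]
where $O_a:=\{(f_1,\ldots,f_k,s):\mathbf m_{H_a\cap G_j^c}(f_j)>\mathbf m(f_j)\ \forall j,\ s\in B^\circ\}$ is \emph{open} in $\mathbf D$ (each $H_a\cap G_j^c$ is compact, hence $\mathbf m_{H_a\cap G_j^c}$ is l.s.c., $\mathbf m$ is u.s.c., so $\{\mathbf m_{H_a\cap G_j^c}(f_j)>\mathbf m(f_j)\}$ is open, and $B^\circ$ is open). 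The open-set half of Portmanteau gives $\liminf_\alpha\mathbb P(Z_\alpha\in O_a)\ge\mathbb P(Z\in O_a)$, which, after replacing $B^\circ$ by $B$ (again a $\sigma$-continuity set), equals $\mathbb P(A(X^{(j)})\cap H_a\subseteq G_j\ \forall j,\ \sigma\in B)$. Letting $a\to\infty$ — the overflow term vanishing by~\eqref{Cond3ExtendedArginfTheorem}, and $\bigcap_j\{A(X^{(j)})\cap H_a\subseteq G_j\}\downarrow\bigcap_j\{A(X^{(j)})\subseteq G_j\}$ with continuity of $\mathbb P$ from above — yields~\eqref{LiminfIneqExArginf}.

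The bookkeeping above is routine; the real work, and the main obstacle, is the semicontinuity of $\mathbf m$ and $\mathbf m_K$ on the \emph{multivariate} Skorokhod space, where the structure of the time changes on $D(\mathbb R^{d})$ and the quadrant-limit definition of $\bar f$ must be handled with care. I also note that hypothesis~\eqref{Cond3ExtendedArginfTheorem} enters both inequalities only at the delocalization step, in accordance with the fact that stochastic boundedness is not needed for the compact (localized) forms of such Argmin theorems.
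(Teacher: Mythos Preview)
Your proof is correct and follows the same overall skeleton as the paper's: obtain joint functional convergence from Proposition~\ref{PropositionCriteriaForJointWeakConvInProdSpaceForD(Rd)}, prove~\eqref{LimsupIneqExArginf} first for compact $K_j$ via the closed-set half of Portmanteau, pass to closed $F_j$ by intersecting with $[-a,a]^{d_j}$ and absorbing the overflow through~\eqref{Cond3ExtendedArginfTheorem}, and handle~\eqref{LiminfIneqExArginf} dually. The delocalization steps are essentially identical to the paper's.

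The one genuine difference is in how the Portmanteau step is set up. The paper invokes Proposition~2.3 of Ferger~\cite{ferger2015arginf}, which says $A\colon(D_d,s)\to(\mathcal F_d,\tau_{\mathrm{uF}})$ is continuous, applies the CMT to push the joint convergence forward to $\mathcal F_{d_1}\times\cdots\times\mathcal F_{d_k}\times S$, and then uses that $\mathcal H(K)$ is $\tau_{\mathrm{uF}}$-closed and $\mathcal M(K)$ is $\tau_{\mathrm{uF}}$-open. You bypass the hyperspace entirely, encoding the same information through the semicontinuity of the scalar functionals $\mathbf m$ and $\mathbf m_K$ on $D_d$ and applying Portmanteau directly in $D_{d_1}\times\cdots\times D_{d_k}\times S$. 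These are equivalent packagings: continuity of $A$ into $\tau_{\mathrm{uF}}$ means exactly that each preimage $A^{-1}(\mathcal M(K))=\{f:\mathbf m_K(f)>\mathbf m(f)\}$ is open in $D_d$, which is what your semicontinuity claims give (and, as you note, is precisely the content established in~\cite{ferger2015arginf}). The paper's route has the advantage that the key regularity input is a single black-box continuity statement already available in the literature; your route avoids introducing the upper Fell topology but requires stating (and justifying) two semicontinuity facts, which you correctly flag as the non-routine part.
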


\begin{proof}
By Proposition 2 in Ferger \cite{ferger2015arginf} the map $A: (D_d,s) \rightarrow (\mathcal{F}_d, \tau_{\text{uF}})$ is sequentially continuous for each $d \in \mathbb{N}$ and therefore by Theorem 7.1.3 in Singh (2019) \cite{Singh2019} it is continuous, because every metric space is first countable. Thus Proposition \ref{PropositionCriteriaForJointWeakConvInProdSpaceForD(Rd)} and the CMT yield that
\begin{equation} \label{TempEq1ExtendedArginfTheorem}
\begin{aligned}
&\left( A \left( X_{\alpha}^{(1)} \right), \hdots, A \left(X_{\alpha}^{(k)} \right), \sigma_{\alpha} \right) \xrightarrow{\mathcal{L}} \left( A \left(X^{(1)} \right), \hdots, A \left( X^{(k)} \right) , \sigma \right) \\
&\text{ in the topological product-space } \mathcal{F}_{d_1} \times \hdots \times \mathcal{F}_{d_k} \times S.
\end{aligned}
\end{equation}
In a first step we assume that the $F_j \subseteq \mathbb{R}^{d_j}$, $1 \le j \le k$, are actually compact. By construction of the upper Fell-topology, $\mathcal{M} (F_j) \in \tau_{\text{uF}}$, whence $\mathcal{H}(F_j)$ is $\tau_{\text{uF}}$-closed. If $B \subseteq S$ is closed, then $\mathcal{H}(F_1) \times \hdots \times \mathcal{H}(F_k) \times B$ is closed in the topological product $\mathcal{F}_{d_1} \times \hdots \times \mathcal{F}_{d_k} \times S$. Obviously,
\begin{align*}
\left\lbrace \xi_{\alpha,j} \in F_j \right\rbrace \subseteq \left\lbrace A \left( X_{\alpha}^{(j)} \right) \in \mathcal{H}(F_j) \right\rbrace,
\end{align*}
for each $1 \le j \le k$, whence it follows from \eqref{TempEq1ExtendedArginfTheorem} with the Portmanteau theorem:
\begin{align*}
&\limsup_{\alpha} \mathbb{P} \left( \xi_{\alpha,j} \in F_j, 1 \le j \le k, \sigma_{\alpha} \in B \right) \\
&\leq \limsup_{\alpha} \mathbb{P} \left( \left( A \left( X_{\alpha}^{(1)} \right),\hdots, A \left(  X_{\alpha}^{(k)} \right), \sigma_{\alpha} \right) \in \mathcal{H}(F_1) \times \hdots \times \mathcal{H}(F_k) \times B \right) \\
&\leq \mathbb{P} \left( \left( A \left(X^{(1)} \right), \hdots, A \left( X^{(k)} \right), \sigma  \right)  \in  \mathcal{H} (F_1) \times \hdots \times \mathcal{H}(F_k) \times B \right).
\end{align*}
This shows
\begin{equation} \label{TempEq2ExtendedArginfTheorem}
\begin{aligned}
&\limsup_{\alpha} \mathbb{P} \left( \xi_{\alpha,j} \in F_j, 1 \le j \le k, \sigma_{\alpha} \in B \right) \\
&\leq \mathbb{P} \left( A \left( X^{(j)} \right) \cap F_j \neq \emptyset, 1 \le j \le k, \sigma \in B \right)
\end{aligned}
\end{equation}
for all compact $F_j \subseteq \mathbb{R}^{d_j}$ and closed $B \in \mathcal{B}(S)$. Next, assume that the $F_j$ are closed. Since
\begin{align*}
\Omega = \left\lbrace \Vert \xi_{\alpha} \Vert_{\infty} \leq a \right\rbrace \cup \left\lbrace \Vert \xi_{\alpha} \Vert_{\infty} > a \right\rbrace,
\end{align*}
for each $\alpha \in I$ and $a > 0$, we obtain:
\begin{equation} \label{TempEq3ExtendedArginfTheorem}
\begin{aligned}
&\limsup_{\alpha} \mathbb{P} \left( \xi_{\alpha,j} \in F_j, 1 \le j \le k, \sigma_{\alpha} \in B \right) \\
&\leq \limsup_{\alpha} \mathbb{P} \left( \xi_{\alpha,j} \in F_j, 1 \le j \le k, \sigma_{\alpha} \in B, \Vert \xi_{\alpha} \Vert_{\infty} \leq a \right) + \limsup_{\alpha} \mathbb{P} \left( \Vert \xi_{\alpha} \Vert_{\infty} > a \right) \\
&=: P_1 (a) + P_2 (a)
\end{aligned}
\end{equation}
for each $a > 0$. From
\begin{align*}
\left\lbrace \Vert \xi_{\alpha} \Vert_{\infty} \leq a \right\rbrace = \left\lbrace \xi_{\alpha,j} \in [-a,a]^{d_j}, 1 \le j \le k \right\rbrace,
\end{align*}
it follows that
\begin{align*}
P_1(a) &=\limsup_{\alpha} \mathbb{P} \left( \xi_{\alpha,j} \in F_j, 1 \le j \le k, \sigma_{\alpha} \in B, \Vert \xi_{\alpha} \Vert_{\infty} \leq a \right)\\
&= \limsup_{\alpha} \mathbb{P} \left( \xi_{\alpha,j} \in F_j \cap [-a,a]^{d_j}, 1 \le j \le k, \sigma_{\alpha} \in B \right) \\
&\leq \mathbb{P} \left( A \left( X^{(j)} \right) \cap \left( F_j \cap [-a,a]^{d_j} \right) \neq \emptyset, 1 \le j \le k, \sigma \in B \right) \\
&\leq \mathbb{P} \left( A \left( X^{(j)} \right) \cap F_j \neq \emptyset, 1 \le j \le k, \sigma \in B \right)
\end{align*}
for each $a > 0$. Here, the first inequality holds by \eqref{TempEq2ExtendedArginfTheorem}, because $F_j \cap [-a,a]^{d_j}$ is compact for each $1 \le j \le k$ and every $a > 0$. The last inequality holds, because $F_j \cap [-a,a]^{d_j} \subseteq F_j$ and $\mathcal{H}(\cdot)$ is monotone increasing with respect to $\subseteq$. Since
$$
P_2(a)= \limsup_{\alpha} \mathbb{P} \left( \Vert \xi_{\alpha} \Vert_{\infty} > a \right) \rightarrow 0, \; a \rightarrow \infty
$$
by assumption \eqref{Cond3ExtendedArginfTheorem}, we obtain \eqref{LimsupIneqExArginf} for all closed $B \subseteq S$. In general, for $\sigma$-continuity sets $B$ first use $B \subseteq \overline{B}$ and then apply \eqref{LimsupIneqExArginf} to the closed $\overline{B}$, the closure
of $B$ in (any metric space) $S$. Finally, deduce
\begin{align*}
&\mathbb{P} \left( A \left( X^{(j)} \right) \cap F_j \neq \emptyset, 1 \le j \le k, \sigma \in \overline{B} \right) \\
&\qquad \qquad \qquad = \mathbb{P} \left( A \left( X^{(j)} \right) \cap F_j \neq \emptyset, 1 \le j \le k, \sigma \in B \right)
\end{align*}
from $B \subseteq \overline{B} = \mathring{B} \cup \partial B$ and $\mathring{B} \subseteq B$, where $\mathring{B}$ denotes the interior of $B$ in (any metric space) $S$. \\

For the proof of \eqref{LiminfIneqExArginf} let firstly $K_1,\hdots, K_k$ be compact. We use the notation $M^C$ for the complement of any set $M$. Since
\begin{align*}
\left\lbrace \xi_{\alpha,j} \in K_j^{\mathrm{C}} \right\rbrace \supseteq \left\lbrace A \left(X_{\alpha}^{(j)} \right) \in \mathcal{M}(K_j) \right\rbrace,
\end{align*}
for each $1 \le j \le k$, it follows that
\begin{equation} \label{TempEq4ExtendedArginfTheorem}
\begin{aligned}
&\liminf_{\alpha} \mathbb{P} \left( \xi_{\alpha,j} \in K_j^{\mathrm{C}}, 1 \le j \le k, \sigma_{\alpha} \in B \right) \\
&= \liminf_{\alpha} \mathbb{P} \left( \left( A \left(X^{(1)}_{\alpha} \right), \hdots, A \left( X^{(k)}_{\alpha} \right), \sigma_{\alpha} \right) \in  \mathcal{M}(K_1) \times \hdots \times \mathcal{M}(K_k) \times \mathring{B} \right) \\
&\geq \mathbb{P} \left( A \left( X^{(j)} \right) \cap K_j = \emptyset, 1 \le j \le k, \sigma \in \mathring{B} \right) \\
&= \mathbb{P} \left( A \left( X^{(j)} \right) \subseteq K_j^{\mathrm{C}}, 1 \le j \le k, \sigma \in B \right).
\end{aligned}
\end{equation}
Here, the inequality $\ge$ is a consequence of (\ref{TempEq1ExtendedArginfTheorem}) and the Portmanteau-Theorem, because
$\mathcal{M}(K_1) \times \hdots \times \mathcal{M}(K_k) \times \mathring{B}$ is open in $\mathcal{F}_{d_1} \times \hdots \times \mathcal{F}_{d_k} \times S.$ The last equality holds, since $B$ is a $\sigma$-continuity-set.

Next, let $I_j := [-a,a]^{d_j}$ for each $1 \le j \le k$ and $a > 0$. Then
\begin{align*}
\left\lbrace \xi_{\alpha,j} \in G_j, \xi_{\alpha,j} \in I_j \right\rbrace = \left\lbrace \xi_{\alpha,j} \in \left( G_j^{\mathrm{C}} \cap I_j \right)^{\mathrm{C}} \cap I_j \right\rbrace,
\end{align*}
because $(G_j^{\mathrm{C}} \cap I_j)^{\mathrm{C}} \cap I_j = G_j \cap I_j$. Hence we obtain
\begin{equation} \label{TempEq5ExtendedArginfTheorem}
\begin{aligned}
&\liminf_{\alpha} \mathbb{P} \left( \xi_{\alpha,j} \in G_j, 1 \le j \le k, \sigma_{\alpha} \in B \right) \\
&\geq \liminf_{\alpha} \mathbb{P} \left( \xi_{\alpha,j} \in G_j, 1 \le j \le k, \sigma_{\alpha} \in B, \Vert \xi_{\alpha} \Vert_{\infty} \leq a \right) \\
&= \liminf_{\alpha} \mathbb{P} \left( \xi_{\alpha,j} \in K_j^{\mathrm{C}}, 1 \le j \le k, \sigma_{\alpha} \in B, \Vert \xi_{\alpha} \Vert_{\infty} \leq a \right),
\end{aligned}
\end{equation}
where $K_j := G_j^{\mathrm{C}} \cap I_j$ is compact for each $1 \le j \le k$. Since $\mathbb{P}(A \cap B) \geq \mathbb{P}(A) - \mathbb{P}(B^{\mathrm{C}})$ for all $A,B \in \mathcal{A}$, the last limit inferior in \eqref{TempEq5ExtendedArginfTheorem} has the following lower bound:
\begin{align*}
&\liminf_{\alpha} \mathbb{P} \left( \xi_{\alpha,j} \in K_j^{\mathrm{C}}, 1 \le j \le k, \sigma_{\alpha} \in B \right) - P_2 (a) \\
&\geq \mathbb{P} \left( A \left( X^{(j)} \right) \subseteq K_j^{\mathrm{C}}, 1 \le j \le k, \sigma \in B \right) - P_2 (a) \\
&\geq \mathbb{P} \left( A \left(X^{(j)} \right) \subseteq G_j, 1 \le j \le k, \sigma \in B \right) - P_2(a).
\end{align*}
Here, the first inequality holds by \eqref{TempEq4ExtendedArginfTheorem} and the last one holds, because $K_j \subseteq G_j^{\mathrm{C}}$, whence $K_j^{\mathrm{C}} \supseteq G_j$. Taking the limit $a \rightarrow \infty$ yields \eqref{LiminfIneqExArginf} upon noticing that $P_2(a) \rightarrow 0$ by \eqref{Cond3ExtendedArginfTheorem}.
\end{proof}

\begin{remark} \label{capacitycontainmentfunctional}
Notice that for general sets $E_j \subseteq \mathbb{R}^{d_j}$ the following equivalences
hold:
$$
 A(X^{(j)}) \cap E_j \neq \emptyset, 1 \le j \le k \; \Longleftrightarrow \; A(X^{(1)})\times \cdots \times A(X^{(k)}) \cap (E_1\times \cdots \times E_k) \neq \emptyset
$$
and
$$
 A(X^{(j)}) \subseteq E_j, 1 \le j \le k \; \Longleftrightarrow \; A(X^{(1)})\times \cdots \times A(X^{(k)}) \subseteq E_1\times \cdots \times E_k.
$$

Consider $C:= A(X^{(1)})\times \cdots \times A(X^{(k)})\times \{\sigma\}$. We already know that each $A(X^{(j)})$ is a random closed set in $\mathbb{R}^{d_j}, 1 \le j \le k,$ and clearly $\{\sigma\}$ is a random closed set in $S$. Thus, by Theorem 1.3.25 in Molchanov \cite{Molchanov2017}
the cartesian product $C$ is a random closed set in $H=\mathbb{R}^{d_1} \times \cdots \times \mathbb{R}^{d_k} \times S$. Let $\mu$ and $\nu$ be the capacity-functional and the containment-functional, respectively, of $C$ in $H$, i.e.
$$
 \mu(E)=\mathbb{P}(C \cap E \neq \emptyset) \; \text{ and } \; \nu(E)=\mathbb{P}(C \subseteq E) \; \text{ for all Borel-sets } E \text{ in } H.
$$
If $P_\alpha$ is the distribution of $(\xi_{\alpha,1},\hdots, \xi_{\alpha,k},\sigma_{\alpha})$, then our limit-results (\ref{LimsupIneqExArginf}) and (\ref{LiminfIneqExArginf}) can equivalently be rewritten as
$$
 \limsup_\alpha P_\alpha(F_1 \times \cdots \times F_k \times B) \le \mu(F_1 \times \cdots \times F_k \times B)
$$
and
$$
 \liminf_\alpha P_\alpha(G_1 \times \cdots \times G_k \times B) \ge \nu(G_1 \times \cdots \times G_k \times B).
$$
Therefore, we say that the $P_\alpha$ converge weakly to $(\mu,\nu)$.
\end{remark}

Assume that each limit process $X^{(j)}$ has a unique minimizing point $\xi_j$ with probability one. Then $C$ simplifies to
$C=\{\xi_1\} \times \cdots \times \{\xi_k\} \times \{\sigma\}$ and hence $\mu=\nu=P$. Thus, we may expect weak convergence
$P_\alpha \rightarrow_w P$. The following corollary shows that this is indeed the case

\begin{corollary} \label{CorollaryExtendedArginf}
Under the assumptions of Theorem \ref{ExtendedArginfTheorem} suppose in addition that there are random variables $\xi_j \in \mathbb{R}^{d_j}$ such that $A(X^{(j)}) = \lbrace \xi_j \rbrace$ a.s. for every $1 \leq j \leq k$. Then
\begin{equation} \label{EqConvDistrExtendedArginf}
\begin{aligned}
\left( \xi_{\alpha,1},\hdots, \xi_{\alpha,k}, \sigma_{\alpha} \right) \xrightarrow{\mathcal{L}} \left( \xi_1, \hdots, \xi_k, \sigma \right) \text{ in } \mathbb{R}^{d_1} \times \cdots \mathbb{R}^{d_k} \times S.
\end{aligned}
\end{equation}
\end{corollary}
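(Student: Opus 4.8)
The goal is to promote the ``rectangular'' Portmanteau inequalities supplied by Theorem~\ref{ExtendedArginfTheorem} to genuine weak convergence $P_\alpha\rightarrow_w P$ on $H:=\mathbb{R}^{d_1}\times\cdots\times\mathbb{R}^{d_k}\times S$, where $P_\alpha$ denotes the law of $(\xi_{\alpha,1},\ldots,\xi_{\alpha,k},\sigma_\alpha)$ and $P$ the law of $(\xi_1,\ldots,\xi_k,\sigma)$. First I would insert $A(X^{(j)})=\{\xi_j\}$ a.s.\ into \eqref{LimsupIneqExArginf} and \eqref{LiminfIneqExArginf}: since then $\{A(X^{(j)})\cap F_j\neq\emptyset\}=\{\xi_j\in F_j\}$ and $\{A(X^{(j)})\subseteq G_j\}=\{\xi_j\in G_j\}$ up to $\mathbb{P}$-null sets, the theorem reduces to $\limsup_\alpha P_\alpha(F_1\times\cdots\times F_k\times B)\le P(F_1\times\cdots\times F_k\times B)$ for all closed $F_j\subseteq\mathbb{R}^{d_j}$ and all $\sigma$-continuity sets $B$, and to $\liminf_\alpha P_\alpha(G_1\times\cdots\times G_k\times B)\ge P(G_1\times\cdots\times G_k\times B)$ for all open $G_j\subseteq\mathbb{R}^{d_j}$ and all $\sigma$-continuity sets $B$. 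It therefore remains to pass from these product-set estimates to all open subsets of $H$.

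Next I would secure relative compactness of $(P_\alpha)$. Condition \eqref{Cond3ExtendedArginfTheorem} makes $(\xi_\alpha)$ stochastically bounded, hence tight in $\mathbb{R}^{d_1}\times\cdots\times\mathbb{R}^{d_k}$; moreover $\sigma_\alpha\xrightarrow{\mathcal{L}}\sigma$ by the Continuous Mapping Theorem applied to \eqref{Cond2PropositionCriteriaForJointWeakConvInProdSpaceForD(Rd)} (exactly as in the proof of Proposition~\ref{PropositionCriteriaForJointWeakConvInProdSpaceForD(Rd)}), so $(\sigma_\alpha)$ is tight by Prokhorov's theorem because $S$ is Polish. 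Thus $(P_\alpha)$ is tight, hence relatively compact, and by the subnet criterion it suffices to show that every weak limit $Q$ of a convergent subnet $(P_{\alpha''})$ of $(P_\alpha)$ equals $P$.

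To identify $Q$, fix once and for all the countably many ``bad'' coordinate hyperplanes and ball radii: for each $j$ only countably many affine hyperplanes carry positive mass under the law of $\xi_j$ or under the $j$-th marginal $Q_j$ of $Q$, and only countably many radii are bad for the law of $\sigma$ or for the $S$-marginal $Q_{k+1}$. Let $\mathcal{R}$ consist of the ``nice rectangles'' $D=R_1\times\cdots\times R_k\times B$ with $R_j$ a bounded open box in $\mathbb{R}^{d_j}$ whose boundary avoids the bad hyperplanes (so $\mathbb{P}(\xi_j\in\partial R_j)=0=Q_j(\partial R_j)$) and $B$ an open ball, or a finite intersection of such, whose boundary avoids the bad radii (so $\mathbb{P}(\sigma\in\partial B)=0=Q_{k+1}(\partial B)$). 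Then each $D\in\mathcal{R}$ is a $Q$-continuity set, $\mathcal{R}$ is stable under finite intersections, generates $\mathcal{B}(H)$, and contains a sequence increasing to $H$ (take $R_j=(-a_n,a_n)^{d_j}$ with good $a_n\uparrow\infty$ and $B=S$). For a single $D\in\mathcal{R}$ I would then sandwich: by the liminf estimate applied to the open boxes $R_j$, $P(D)\le\liminf_{\alpha''}P_{\alpha''}(D)$; and by the limsup estimate applied to the closed boxes $\overline{R_j}$ and the $\sigma$-continuity set $\overline B$ (note $\partial\overline B\subseteq\partial B$), together with $\mathbb{P}(\xi_j\in\partial R_j)=0$ and $\mathbb{P}(\sigma\in\partial B)=0$, one gets $\limsup_{\alpha''}P_{\alpha''}(D)\le\limsup_{\alpha''}P_{\alpha''}(\overline D)\le P(\overline D)=P(D)$. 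Hence $P_{\alpha''}(D)\to P(D)$; but $P_{\alpha''}\xrightarrow{\mathcal{L}}Q$ and $D$ is a $Q$-continuity set, so also $P_{\alpha''}(D)\to Q(D)$. Therefore $Q(D)=P(D)$ for every $D\in\mathcal{R}$, whence $Q=P$ by the uniqueness theorem for probability measures, and the subnet criterion yields \eqref{EqConvDistrExtendedArginf}.

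I expect the only real obstacle to be bookkeeping rather than analysis: one must check that the null-boundary requirements, imposed simultaneously for the law of $(\xi_1,\ldots,\xi_k,\sigma)$ and for $Q$, still leave enough rectangles to form a base of $H$, and one must be a little careful that the Portmanteau theorem and the measure-uniqueness lemma are invoked in their net-versions. As an alternative that avoids subnets entirely, one may prove directly that $\liminf_\alpha P_\alpha(G)\ge P(G)$ for every open $G\subseteq H$ — write $G$ as a countable union of nice rectangles, apply the two estimates together with inclusion--exclusion to finite sub-unions, and let the number of rectangles tend to infinity — and then invoke the open-set half of the Portmanteau theorem for nets; either way, all the genuine probabilistic content is already contained in Theorem~\ref{ExtendedArginfTheorem}.
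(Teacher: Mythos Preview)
Your proof is correct and shares with the paper the same core device: after substituting $A(X^{(j)})=\{\xi_j\}$ a.s.\ into \eqref{LimsupIneqExArginf} and \eqref{LiminfIneqExArginf}, one sandwiches a product of continuity sets between the product of interiors and the product of closures, and observes that the two bounds coincide because all boundaries are null. Where you and the paper diverge is in how this rectangular information is promoted to full weak convergence. The paper proceeds directly: it shows that
\[
\lim_\alpha \mathbb{P}\bigl(\xi_{\alpha,j}\in B_j,\ 1\le j\le k,\ \sigma_\alpha\in B\bigr)
=\mathbb{P}\bigl(\xi_j\in B_j,\ 1\le j\le k,\ \sigma\in B\bigr)
\]
for all $\xi_j$-continuity sets $B_j$ and $\sigma$-continuity sets $B$, and then simply invokes Theorem~2.8 in Billingsley \cite{Billingsley1999} (valid for nets), which says that convergence on such a product $\pi$-system already implies weak convergence on the product space. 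No tightness, no subnets, no auxiliary limit $Q$ are needed.

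Your route --- tightness of $(P_\alpha)$, extraction of a weakly convergent subnet with limit $Q$, construction of a $\pi$-system of ``nice rectangles'' that are simultaneously $P$- and $Q$-continuity sets, and identification $Q=P$ via measure uniqueness --- is in effect a hands-on reproof of Billingsley's theorem in this particular setting. This buys self-containedness (no external reference), at the price of extra bookkeeping: you must manage the bad hyperplanes and radii for \emph{two} measures at once and verify that the resulting class still generates $\mathcal{B}(H)$ and contains an exhausting sequence (your choice $B=S$ for the last factor is a small slip, since $S$ need not be a ball; use balls of large good radius instead). The paper's approach is shorter precisely because all of this is packaged inside the cited theorem.
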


\begin{proof}
Firstly, observe that
\begin{align*}
\left\lbrace A \left( X^{(j)} \right) \cap F_j \neq \emptyset \right\rbrace = \lbrace \xi_j \in F_j \rbrace \text{ and } \left\lbrace  A \left( X^{(j)} \right) \subseteq G_j \right\rbrace  = \lbrace \xi_j \in G_j \rbrace.
\end{align*}
Now, let $B_j$ be a $\xi_j$-continuity set for $1 \le j \le k$ and $B$ a $\sigma$-continuity set. Then
\begin{align}
&\mathbb{P} \left( \xi_j \in \mathring{B}_j, 1 \le j \le k, \sigma \in B \right) \notag \\
&\leq \liminf_{\alpha} \mathbb{P} \left( \xi_{\alpha,j} \in \mathring{B}_j, 1 \le j \le k, \sigma_{\alpha} \in B \right) &&\text{ by \eqref{LiminfIneqExArginf}} \notag \\
&\leq \liminf_{\alpha} \mathbb{P} \left( \xi_{\alpha,j} \in B_j, 1 \le j \le k, \sigma_{\alpha} \in B \right) \notag \\
&\leq \limsup_{\alpha} \mathbb{P} \left( \xi_{\alpha,j} \in B_j, 1 \le j \le k, \sigma_{\alpha} \in B \right) \notag \\
&\leq \limsup_{\alpha} \mathbb{P} \left( \xi_{\alpha,j} \in \overline{B}_j, 1 \le j \le k, \sigma_{\alpha} \in B \right) \notag \\
&\leq \mathbb{P} \left( \xi_{j} \in \overline{B}_j, 1 \le j \le k, \sigma \in B \right) &&\text{ by \eqref{LimsupIneqExArginf}} \notag \\
&= \mathbb{P} \left( \xi_j \in \mathring{B}_j, 1 \le j \le k, \sigma \in B \right). \label{TempEq1CorollaryExtendedArginf}
\end{align}
To see the last equality put
\begin{align*}
M_j := \left\lbrace \xi_j \in \mathring{B}_j, \sigma \in B \right\rbrace, \qquad N_j := \left\lbrace \xi_j \in \overline{B}_j, \sigma \in B \right\rbrace, \qquad 1 \le j \le k.
\end{align*}
Then $M_j \subseteq N_j$ for each $1 \le j \le k$ and consequently
\begin{align*}
M := \bigcap_{j=1}^k M_j \subseteq \bigcap_{j=1}^k N_j =: N.
\end{align*}
Thus
\begin{align*}
0 &\leq \mathbb{P}(N) - \mathbb{P}(M) = \mathbb{P}( N \setminus M) = \mathbb{P} \Big( N \cap \Big( \bigcup_{j=1}^k M_j^{\mathrm{C}} \Big) \Big) = \mathbb{P} \Big( \bigcup_{j=1}^k \Big( N \cap M_j^{\mathrm{C}} \Big) \Big) \\
&\leq \mathbb{P} \Big( \bigcup_{j = 1}^k \Big( N_j \cap M_j^{\mathrm{C}} \Big) \Big) \leq \sum_{j=1}^k \mathbb{P} \left( N_j \setminus M_j \right) = \sum_{j=1}^k \Big( \mathbb{P}(N_j) - \mathbb{P}(M_j) \Big) = 0,
\end{align*}
because every summand vanishes To see this, recall that $\overline{B}_j = \mathring{B}_j \cup \partial B_j$, whence
\begin{align*}
\mathbb{P}(M_j) &\leq \mathbb{P}(N_j) = \mathbb{P} \left( \xi_j \in \overline{B}_j, \sigma \in B \right) \\
&\leq \mathbb{P} \left( \xi_j \in \mathring{B}_j, \sigma \in B \right) + \mathbb{P} \left( \xi_j \in \partial B_j \right) \\
&= \mathbb{P}(M_j).
\end{align*}
This shows the equality \eqref{TempEq1CorollaryExtendedArginf}, and further that
\begin{align*}
\mathbb{P} \left( \xi_j \in \mathring{B}_j, 1 \le j \le k, \sigma \in B \right) = \mathbb{P} \left( \xi_j \in B_j, 1 \le j \le k, \sigma \in B \right),
\end{align*}
because $\mathring{B}_j \subseteq B_j \subseteq \overline{B}_j$. So, finally we arrive at
\begin{align*}
\lim_{\alpha} \mathbb{P} \left( \xi_{\alpha,j} \in B_j, 1 \le j \le k, \sigma_{\alpha} \in B \right) = \mathbb{P} \left( \xi_j \in B_j, 1 \le j \le k, \sigma \in B \right),
\end{align*}
and the assertion follows from Theorem 2.8 in \cite{Billingsley1999}, which -as its proof shows- also holds for nets.
\end{proof}
\noindent

Our next result involves the smallest and largest minimizer
\begin{align*}
\xi_{\text{min}}^{(j)}:= \allowbreak \text{sargmax}(X^{(j)}) \text{ and } \xi_{\text{max}}^{(j)}:=\text{largmax}(X^{(j)})
\end{align*}
of $X^{(j)}$, $1 \leq j \leq k$. If $X^{(j)}$ is \emph{coercive}, i.e. $X^{(j)}(t) \rightarrow \infty$ as $|t| \rightarrow \infty$, then $A(X^{(j)})$ is bounded,
whence as a closed set it is compact. Consequently, $\xi_{\text{min}}^{(j)} \in \mathbb{R}^{d_j}$ and $\xi_{\text{max}}^{(j)} \in \mathbb{R}^{d_j}$ in the sense of Seijo and Sen \cite{seijo2011continuous} exist.

In the following corollary we use the usual convention for vectors $x=(x^{(1)},\ldots,x^{(d)})$ and $y=(y^{(1)},\ldots,y^{(d)})$ in the euclidian space $\mathbb{R}^d: x \le y$, if $x^{(i)} \le y^{(i)}$ for all $i$ and $x<y$, if $x^{(i)} < y^{(i)}$  for all $i$. With this definition one obtains
that $(-\infty,x]=(-\infty,x^{(1)}] \times \ldots \times (-\infty,x^{(d)}]$ and $(-\infty,x)=(-\infty,x^{(1)}) \times \ldots \times (-\infty,x^{(d)}).$

\begin{corollary} \label{CorollaryUnivariateExtendedArginf}
Under the assumptions of Theorem \ref{ExtendedArginfTheorem} suppose in addition that every limit process $X^{(j)}$, $1 \le j \le k,$ has a.s. a smallest and a largest minimizing point $\xi_{\text{min}}^{(j)}:=\text{sargmax}(X^{(j)})$ and $\xi_{\text{max}}^{(j)}:= \text{largmax}(X^{(j)})$ ( as for instance, when each $X^{(j)}$ is a.s. coercive.) Then
\begin{equation} \label{le}
\limsup_{\alpha} \mathbb{P} \left( \xi_{\alpha,j} \leq x_j, 1 \le j \le k, \sigma_{\alpha} \in B \right) \leq \mathbb{P} \left( \xi_{\text{min}}^{(j)} \leq x_j, 1 \le j \le k, \sigma \in B \right),
\end{equation}
and
\begin{equation} \label{l}
\liminf_{\alpha} \mathbb{P} \left( \xi_{\alpha,j} < x_j, 1 \le j \le k, \sigma_{\alpha} \in B \right) \geq \mathbb{P} \left( \xi_{\text{max}}^{(j)} < x_j, 1 \le j \le k, \sigma \in B \right),
\end{equation}
for all $x_j \in \mathbb{R}^{d_j}$, $1 \leq j \leq k$ and for all $\sigma$-continuity sets $B$.
\end{corollary}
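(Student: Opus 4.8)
The plan is to obtain both inequalities as direct specialisations of Theorem \ref{ExtendedArginfTheorem} to lower orthants, and then to translate the resulting capacity- and containment-functional statements into statements about the extremal minimizers $\xi_{\text{min}}^{(j)}$ and $\xi_{\text{max}}^{(j)}$.

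For \eqref{le} I would apply \eqref{LimsupIneqExArginf} with the closed sets $F_j:=(-\infty,x_j]=(-\infty,x_j^{(1)}]\times\cdots\times(-\infty,x_j^{(d_j)}]$; since $\{\xi_{\alpha,j}\le x_j\}=\{\xi_{\alpha,j}\in F_j\}$ this yields
\[
 \limsup_\alpha\mathbb{P}\big(\xi_{\alpha,j}\le x_j,\,1\le j\le k,\,\sigma_\alpha\in B\big)\le\mathbb{P}\big(A(X^{(j)})\cap(-\infty,x_j]\neq\emptyset,\,1\le j\le k,\,\sigma\in B\big).
\]
Dually, for \eqref{l} I would apply \eqref{LiminfIneqExArginf} with the open sets $G_j:=(-\infty,x_j)=(-\infty,x_j^{(1)})\times\cdots\times(-\infty,x_j^{(d_j)})$, using $\{\xi_{\alpha,j}<x_j\}=\{\xi_{\alpha,j}\in G_j\}$, to get
\[
 \liminf_\alpha\mathbb{P}\big(\xi_{\alpha,j}<x_j,\,1\le j\le k,\,\sigma_\alpha\in B\big)\ge\mathbb{P}\big(A(X^{(j)})\subseteq(-\infty,x_j),\,1\le j\le k,\,\sigma\in B\big).
\]
The $\sigma$-continuity set $B$ needs no extra attention, being already built into \eqref{LimsupIneqExArginf} and \eqref{LiminfIneqExArginf}.

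It then remains to rewrite the right-hand events. By hypothesis, for each $j$ the a.s.\ non-empty compact set $A(X^{(j)})$ has a componentwise smallest element $\xi_{\text{min}}^{(j)}$ and a componentwise largest element $\xi_{\text{max}}^{(j)}$ — this is what ``$X^{(j)}$ has a.s.\ a smallest and a largest minimizing point'' means here, and, when they exist, these points coincide with $\mathrm{sargmin}(X^{(j)})$ and $\mathrm{largmin}(X^{(j)})$ in the sense of Seijo and Sen \cite{seijo2011continuous}. One inclusion in each case is immediate from $\xi_{\text{min}}^{(j)},\xi_{\text{max}}^{(j)}\in A(X^{(j)})$, namely $\{\xi_{\text{min}}^{(j)}\le x_j\}\subseteq\{A(X^{(j)})\cap(-\infty,x_j]\neq\emptyset\}$ and $\{A(X^{(j)})\subseteq(-\infty,x_j)\}\subseteq\{\xi_{\text{max}}^{(j)}<x_j\}$. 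For the reverse inclusions: if some $a\in A(X^{(j)})$ satisfies $a\le x_j$, then $\xi_{\text{min}}^{(j)}\le a\le x_j$ by minimality; and if $\xi_{\text{max}}^{(j)}<x_j$, then $a\le\xi_{\text{max}}^{(j)}<x_j$ for every $a\in A(X^{(j)})$, so $A(X^{(j)})\subseteq(-\infty,x_j)$. Hence a.s.\ $\{A(X^{(j)})\cap(-\infty,x_j]\neq\emptyset\}=\{\xi_{\text{min}}^{(j)}\le x_j\}$ and $\{A(X^{(j)})\subseteq(-\infty,x_j)\}=\{\xi_{\text{max}}^{(j)}<x_j\}$. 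Intersecting these identities over $1\le j\le k$ and with $\{\sigma\in B\}$ and inserting them into the two displays gives \eqref{le} and \eqref{l}; measurability of the events $\{\xi_{\text{min}}^{(j)}\le x_j\}$ and $\{\xi_{\text{max}}^{(j)}<x_j\}$ is not a problem, as the identities exhibit them inside the random-closed-set framework of Remark \ref{capacitycontainmentfunctional}.

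The step I expect to cost the most care is this last one — turning ``$A(X^{(j)})$ hits, resp.\ lies inside, a lower orthant'' into ``$\xi_{\text{min}}^{(j)}$, resp.\ $\xi_{\text{max}}^{(j)}$, lies below the corner''. It rests on the \emph{order}-extremality (with respect to the componentwise order) of $\xi_{\text{min}}^{(j)}$ and $\xi_{\text{max}}^{(j)}$ within $A(X^{(j)})$: a merely lexicographic smallest/largest minimizer need not be a componentwise lower, resp.\ upper, bound of $A(X^{(j)})$, so the reverse inclusions above genuinely use that $A(X^{(j)})$ possesses a componentwise minimum and maximum. For $d_j=1$ this is simply the smallest/largest minimizer in the ordinary sense, for which coercivity of $X^{(j)}$ guarantees existence; in higher dimensions the corollary is applied in situations (such as the change-point problems of Section~\ref{SectionExtendedArginf} and the regression applications) where $A(X^{(j)})$ is an order interval, so that these componentwise extremes exist. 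Apart from this identification, the corollary is a routine consequence of Theorem \ref{ExtendedArginfTheorem}.
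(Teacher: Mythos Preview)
Your argument follows exactly the paper's route: apply \eqref{LimsupIneqExArginf} with the closed lower orthants $F_j=(-\infty,x_j]$ and \eqref{LiminfIneqExArginf} with the open lower orthants $G_j=(-\infty,x_j)$, and then rewrite the hitting/containment events in terms of the extremal minimizers. The paper's proof is essentially two lines, because it packages the set identities
\[
\{A(X^{(j)})\cap(-\infty,x_j]\neq\emptyset\}=\{\xi_{\text{min}}^{(j)}\le x_j\},\qquad
\{A(X^{(j)})\subseteq(-\infty,x_j)\}=\{\xi_{\text{max}}^{(j)}<x_j\}
\]
into the appendix as Lemma~\ref{ximaxmin}.

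The one substantive difference is how that lemma is justified. You obtain the reverse inclusions by assuming that $\xi_{\text{min}}^{(j)}$ and $\xi_{\text{max}}^{(j)}$ are \emph{componentwise} extrema of $A(X^{(j)})$, and you explicitly flag that a purely lexicographic sargmin/largmin would not suffice. The paper, by contrast, takes $\xi_{\text{min}}^{(j)}=\mathrm{sargmin}(X^{(j)})$ and $\xi_{\text{max}}^{(j)}=\mathrm{largmin}(X^{(j)})$ in the Seijo--Sen lexicographic sense and argues the identities directly for those: in the proof of Lemma~\ref{ximaxmin} it starts from any $y\in A(f)\cap(-\infty,x]$ and, using the recursive definition of sargmin, successively replaces the coordinates of $y$ by those of $x_{\text{min}}$ while staying inside $A(f)\cap(-\infty,x]$. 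So the paper does not strengthen the hypothesis to ``componentwise minimum exists''; it claims the lexicographic object already satisfies the needed identity. Your version is cleaner when a componentwise minimum is available (as in $d_j=1$ or when $A(X^{(j)})$ is an order interval), but it proves a formally weaker statement than the paper's Lemma~\ref{ximaxmin}.
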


\begin{proof}
Let $F_j = (-\infty, x_j]$ be the closed lower orthant. Then by Lemma \ref{ximaxmin} (1) in the appendix
\begin{align*}
\left\lbrace A \left( X^{(j)} \right) \cap F_j \neq \emptyset \right\rbrace = \left\lbrace \xi_{\text{min}}^{(j)} \leq x_j \right\rbrace
\end{align*}
for every $1 \le j \le k$ and (\ref{le}) follows from \eqref{LimsupIneqExArginf} in Theorem \ref{ExtendedArginfTheorem}. \\
For the proof of \eqref{l} let $G_j = (-\infty,x_j)$ be the open lower orthant. Since
\begin{align*}
\left\lbrace A \left( X^{(j)} \right) \subseteq G_j \right\rbrace = \left\lbrace \xi_{\text{max}}^{(j)} < x_j \right\rbrace
\end{align*}
for every $1 \le j \le k$ by Lemma \ref{ximaxmin} (2), the assertion follows from \eqref{LiminfIneqExArginf} in Theorem \ref{ExtendedArginfTheorem}.
\end{proof}

In case that all processes are univariate ($d_1=\cdots=d_k=1$) Corollary \ref{CorollaryUnivariateExtendedArginf} yields Theorem 2.23 in Rosar (2025) \cite{Rosar2025} (and actually slightly improves it.)

\begin{remark} \label{withoutsigma}
If one is interested only in the vector $(\xi_{\alpha,1},\hdots,\xi_{\alpha,k})$ of all minimizers, then in condition \eqref{Cond2PropositionCriteriaForJointWeakConvInProdSpaceForD(Rd)} and \eqref{Cond2*PropositionCriteriaForJointWeakConvInProdSpaceForD(Rd)} $\sigma_{\alpha}$ and $\sigma$ simply can be omitted resulting in weaker requirements. More precisely, \eqref{Cond2PropositionCriteriaForJointWeakConvInProdSpaceForD(Rd)} reduces to
 \begin{align*}
&\left( \pi_{T_1} \left( X_{\alpha}^{(1)} \right),\hdots, \pi_{T_k} \left( X_{\alpha}^{(k)} \right) \right)  \xrightarrow{\mathcal{L}} \left( \pi_{T_1} \left( X^{(1)} \right), \hdots, \pi_{T_k} \left( X^{(k)} \right) \right)
\end{align*}
in  $\mathbb{R}^{\vert T_1 \vert} \times \hdots \times \mathbb{R}^{\vert T_k \vert}$ for all finite $T_j \subseteq T_{X^{(j)}}$, $1 \le j \le k$,

and \eqref{Cond2*PropositionCriteriaForJointWeakConvInProdSpaceForD(Rd)} to
\begin{align*}
\left( \pi_T \left(X_{\alpha}^{(1)} \right), \hdots, \pi_T \left( X_{\alpha}^{(k)} \right) \right) \xrightarrow{\mathcal{L}} \left(\pi_T \left(X^{(1)} \right), \hdots, \pi_T \left( X^{(k)} \right) \right)
\end{align*}
in $\mathbb{R}^{k \vert T \vert}$ for all finite $T \subseteq T_{X^{(1)}} \cap \hdots \cap T_{X^{(k)}}$.

Then all our results correspondingly hold without the adjunction of $\sigma_{\alpha}$ and $\sigma$. For instance \eqref{LimsupIneqExArginf} becomes:
\begin{align*}
\limsup_{\alpha} \mathbb{P} \left( \xi_{\alpha,j} \in F_j, 1 \le j \le k \right) \leq \mathbb{P} \left( A \left( X^{(j)} \right) \cap F_j \neq \emptyset, 1 \le j \le k \right)
\end{align*}
for all closed $F_j \subseteq \mathbb{R}^{d_j}$, or \eqref{EqConvDistrExtendedArginf} becomes:
\begin{align*}
\left( \xi_{\alpha,1}, \hdots, \xi_{\alpha,k} \right) \xrightarrow{\mathcal{L}} \left( \xi_1, \hdots, \xi_k \right) \text{ in } \mathbb{R}^d.
\end{align*}
\end{remark}

\section{Application} \label{SectionApplication}

We consider the following regression setting: Let $X$ and $Y$ be two real-valued random variables and suppose that there exists a function $m: \mathbb{R} \rightarrow \mathbb{R}$ such that $Y = m(X) + \epsilon$, where $\mathbb{E}[Y^2] < \infty$ and $\epsilon$ denotes a centered stochastic error. $\epsilon$ is not necessarily independent of $X$. However, it is assumed to satisfy the weaker condition $\mathbb{E}[\epsilon \vert X] = 0$ a.s. In consequence, $m(X) = \mathbb{E}[Y \vert X]$ a.s. and
\begin{equation} \label{EqRegMinL2Distance}
\begin{aligned}
m \in \argmin \left\lbrace \mathbb{E} \left[ (Y - g(X))^2 \right]: g \in L_2(\mathbb{R},\mathcal{B}(\mathbb{R}),Q)  \right\rbrace,
\end{aligned}
\end{equation}
where $Q$ is the distribution of $X$. Following the approach in Bühlmann and Yu (2002) \cite{Buehlmann2002} as well as Banerjee and McKeague (2007) \cite{Banerjee2007}, we do not  estimate $m$ itself. Rather we want to estimate the minimizer of the $L_2$-distance in \eqref{EqRegMinL2Distance} on a smaller domain than $L_2(\mathbb{R},\mathcal{B}(\mathbb{R}),Q)$. For this purpose, let $k \in \mathbb{N}$ and
\begin{align*}
\Lambda_{<}^k := \left\lbrace (t_1,\hdots,t_k) \in \mathbb{R}^k : t_1 < \hdots < t_k \right\rbrace.
\end{align*}
Each tuple of parameters $(t,a) = (t_1,\hdots,t_k,a_1, \hdots,a_{k+1}) \in \Lambda_{<}^k \times \mathbb{R}^{k+1}$ uniquely determines a step function $g_{(t,a)}: \mathbb{R} \rightarrow \mathbb{R}$  with $k$ jumps by
\begin{align*}
g_{(t,a)}(x) := a_1 \1_{x \leq t_1} + a_2 \1_{t_1 < x \leq t_2} + \hdots + a_{k} \1_{t_{k-1} < x \leq t_{k}} + a_{k+1} \1_{x > t_k}.
\end{align*}
Suppose there exists exactly one step function $g_{(\tau,\alpha)}$, that minimizes the $L_2$-distance in \eqref{EqRegMinL2Distance} among all step functions with $k$ jumps, i.e.
\begin{equation} \label{UniqueMinPropTauAlpha}
\begin{aligned}
(\tau, \alpha) \in \argmin \left\lbrace S(t,a):=  \mathbb{E} \left[ (Y - g_{(t,a)}(X))^2  \right]: (t,a) \in \Lambda_<^k \times \mathbb{R}^{k+1} \right\rbrace.
\end{aligned}
\end{equation}
Notice that $$\mathbb{E} \left[ (Y - g_{(t,a)}(X))^2  \right]=\mathbb{E} \left[ (m(X) - g_{(t,a)}(X))^2  \right]+\mathbb{E} \left[\epsilon^2  \right],$$
whence $g_{(\tau,\alpha)}$ is the best-approximation of $m$ in $L_2(\mathbb{R},\mathcal{B}(\mathbb{R}),Q)$ among all $k$-step functions.

The estimation of $(\tau,\alpha)=(\tau_1,\ldots,\tau_k,\alpha_1,\ldots,\alpha_{k+1})$ is a parametric problem, even if the regression function $m$ is non-parametric. It is assumed that $m$ satisfies the assumptions of Chapter 4 in Rosar \cite{Rosar2025}, where $m$ itself has discontinuities in the jumps of its best $L_2$-approximation $g_{(\tau,\alpha)}$. In detail we suppose for each $1 \le j \le k$ that $m$ is continuous in an open neighbourhood of $\tau_j$ with the exception of $m(\tau_j -) \neq m(\tau_j +)$. \\
\, \\
\begin{figure}[ht]
\centering
\begin{subfigure} {0.48\textwidth}
\includegraphics{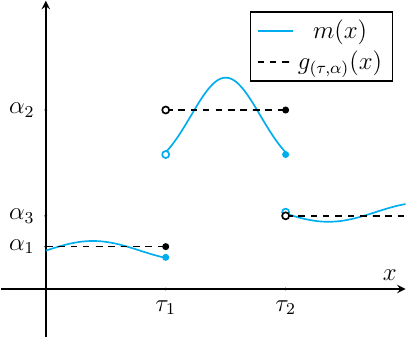}
\caption{Exemplary sketch of a regression function and its best $L_2$-approximation for $k=2$.}
\label{fig:Fig1}
\end{subfigure} \hfill
\begin{subfigure} {0.48\textwidth}
\centering
\includegraphics{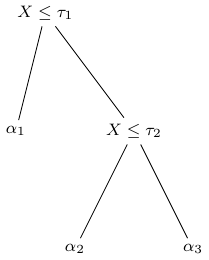}
\caption{Decision tree that results from minimizing parameters $(\tau, \alpha)$ in \eqref{UniqueMinPropTauAlpha} for $k=2$.}
\label{fig:Fig2}
\end{subfigure}
\end{figure}
\, \\
Note that the parameter $(\tau,\alpha)$ can be interpreted as a decision tree in the tradition of Breiman (1993) \cite{Breiman1993}.
To estimate $(\tau, \alpha)$ we use the least squares method: For each $n \in \mathbb{N}$ let $(X_i,Y_i)$, $1 \leq i \leq n$, be i.i.d copies of $(X,Y)$. Then we define an estimator $(\tau_n,\alpha_n)$ as (any) minimizer of the empirical counterpart $S_n$ of $S$,
$$
 S_n(t,a):=\frac{1}{n} \sum_{i=1}^{n} \left( Y_i - g_{(t,a)}(X_i) \right)^2, \text{ i.e. }
$$
\begin{equation} \label{DefEstimator}
\begin{aligned}
(\tau_n, \alpha_n)=(\tau_{n,1},\ldots,\tau_{n,k},\alpha_{n,1},\ldots,\alpha_{n,k+1}) \in A(S_n).
\end{aligned}
\end{equation}
Theorem \ref{ExtendedArginfTheorem} enables us to derive a limit theorem including convergence rates for $(\tau_n, \alpha_n)$ in (\ref{DefEstimator}). We can prove the convergence of
\begin{align*}
\begin{pmatrix}
n (\tau_n - \tau) \\
\sqrt{n} (\alpha_n - \alpha)
\end{pmatrix}
\end{align*}
and identify the limit variable. To do this, for each $n \in \mathbb{N}$ we introduce the \emph{rescaled processes} $Z_n: \mathbb{R}^k \rightarrow \mathbb{R}$ in $D(\mathbb{R}^k)$ in the following way:
\begin{align*}
Z_n(t) = n \left\lbrace S_n(\tau + n^{-1} t, \alpha_n) - S_n(\tau,\alpha_n) \right\rbrace,
\end{align*}
and see from \eqref{DefEstimator} that $n (\tau_n - \tau) \in A(Z_n)$. Rosar \cite{Rosar2025} decomposes for each $n \in \mathbb{N}$ the multivariate $Z_n$ into the sum of $k$ univariate processes $Z_n^{(j)}$  in $D(\mathbb{R}), 1 \le j \le k$, where
\begin{equation} \label{MinRescaledProcesses}
\begin{aligned}
n (\tau_{n,j} - \tau_j) \in A(Z_n^{(j)}).
\end{aligned}
\end{equation}
By Lemma 6.10 in Albrecht \cite{albrecht2020} it turns out that for each $1 \leq j \leq k$ the processes $(Z_n^{(j)})_{n \in \mathbb{N}}$ converge weakly to a certain compound Poisson process $Z^{(j)}$ in $D(\mathbb{R})$. The exact form of these $Z^{(j)}$ is given in Definition 4.10 in \cite{Rosar2025}. This means that \eqref{Cond1PropositionCriteriaForJointWeakConvInProdSpaceForD(Rd)} in Theorem \ref{ExtendedArginfTheorem} is fulfilled. \\
Moreover, $\sqrt{n} (\alpha_n - \alpha)$ converges in distribution to a centered normal vector $W = (W_1,\hdots,W_{k+1})$, defined in Theorem 4.20 in Rosar \cite{Rosar2025}. Here,
\begin{equation} \label{allareindependent}
Z^{(1)}, \hdots, Z^{(k)}, W_1, \hdots W_{k+1} \; \text{ are independent.}
\end{equation}
 Among other things, this follows from Lemma 4.22 in Rosar \cite{Rosar2025}, which says that
\begin{equation}
\begin{aligned} \label{ConvFidisZn}
&\Big( \pi_T \left( Z_n^{(1)} \right), \hdots, \pi_T \left( Z_n^{(k)} \right), \sqrt{n} (\alpha_{n} - \alpha) \Big) \xrightarrow{\mathcal{L}} \left( \pi_T( Z^{(1)}),\hdots, \pi_T(Z^{(k)}), W \right)
\end{aligned}
\end{equation}
in $\mathbb{R}^{k \cdot \vert T \vert} \times \mathbb{R}^{k+1}$ for all finite $T \subseteq \mathbb{R}$.
So, we see that condition \eqref{Cond2*PropositionCriteriaForJointWeakConvInProdSpaceForD(Rd)} is also satisfied.
As to \eqref{Cond3ExtendedArginfTheorem}, the stochastic boundedness of $(n (\tau_n - \tau))_{n \in \mathbb{N}}$ holds by Theorem 4.16 in Rosar \cite{Rosar2025}, i.e.
\begin{equation} \label{StochBoundednessTaun}
\begin{aligned}
\lim_{a \rightarrow \infty} \limsup_{n \rightarrow \infty} \mathbb{P} \left( \left\Vert n (\tau_n - \tau) \right\Vert_{\infty} > a \right) = 0.
\end{aligned}
\end{equation}
In conclusion, we can apply Theorem \ref{ExtendedArginfTheorem}. This leads to Theorem \ref{LimitTheoremRegrEstimator} below.
For its formulation, let $\mu_j$ and $\nu_j$ be the capacity functional and the containment-functional, respectively, of the random closed set $A(Z^{(j)})$, i.e. $\mu_j(E)= \mathbb{P}(A(Z^{(j)}) \cap E \neq \emptyset)$ and $\nu_j(E)=\mathbb{P}(A(Z^{(j)}) \subseteq E)$ for every Borel-set $E \subseteq \mathbb{R}$. Recall that $\mu_j$ in particularly is a Choquet-capacity and that $\nu_j(E)=1-\mu_j(E^C)$.

\begin{theorem} \label{LimitTheoremRegrEstimator}
Let the assumptions 2.1, 2.2, 2.4, 4.1 and 4.2 in Rosar \cite{Rosar2025} be valid. Further, suppose that
\begin{align*}
(\tau_n, \alpha_n) \xrightarrow{\mathbb{P}} (\tau, \alpha), \qquad n \rightarrow \infty.
\end{align*}
Then
\begin{equation} \label{limsup}
\begin{aligned}
&\limsup_{n \rightarrow \infty} \mathbb{P} \Big( n (\tau_{n,j}-\tau_j) \in F_j, 1\le j \le k,  n^{\frac{1}{2}} (\alpha_{n,i}- \alpha_i) \in B_i, 1 \le i\le k+1)\Big)\\
&\le \prod_{j=1}^k \mu_j(F_j) \prod_{i=1}^{k+1} \mathbb{P}( W_i \in B_i ),
\end{aligned}
\end{equation}
for all closed sets $F_j \subseteq \mathbb{R}$ and for all $W_i$-continuity sets $B_i \subseteq \mathbb{R}$, and
\begin{equation} \label{liminf}
\begin{aligned}
&\liminf_{n \rightarrow \infty} \mathbb{P} \Big( n (\tau_{n,j}-\tau_j) \in G_j, 1\le j \le k,  n^{\frac{1}{2}} (\alpha_{n,i}- \alpha_i) \in B_i, 1 \le i\le k+1)\Big)\\
&\ge \prod_{j=1}^k \nu_j(G_j) \prod_{i=1}^{k+1} \mathbb{P}( W_i \in B_i ),
\end{aligned}
\end{equation}
for all open sets $G_j \subseteq \mathbb{R}$ and for all $W_i$-continuity sets $B_i \subseteq \mathbb{R}$.
\end{theorem}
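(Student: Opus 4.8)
The plan is to read off the statement as a direct specialisation of Theorem~\ref{ExtendedArginfTheorem}, followed by an independence factorisation. Apply Theorem~\ref{ExtendedArginfTheorem} with the index set $I=\mathbb{N}$, $d_j:=1$ for $1\le j\le k$, $X_\alpha^{(j)}:=Z_n^{(j)}$, $X^{(j)}:=Z^{(j)}$, $\xi_{\alpha,j}:=n(\tau_{n,j}-\tau_j)$, $\sigma_\alpha:=n^{1/2}(\alpha_n-\alpha)$, $\sigma:=W$ and $S:=\mathbb{R}^{k+1}$ (which is separable and complete). All hypotheses were assembled in the discussion preceding the theorem: by \eqref{MinRescaledProcesses} the point $n(\tau_{n,j}-\tau_j)$ lies in $A(Z_n^{(j)})$, so $A(Z_n^{(j)})\neq\emptyset$ a.s.\ and it is a measurable selection (its measurability is contained in the standing assumption $(\tau_n,\alpha_n)\xrightarrow{\mathbb{P}}(\tau,\alpha)$); Albrecht's Lemma~6.10 gives weak convergence $Z_n^{(j)}\xrightarrow{\mathcal{L}}Z^{(j)}$ in $D(\mathbb{R})$ and hence tightness of $(Z_n^{(j)})_n$, i.e.\ \eqref{Cond1PropositionCriteriaForJointWeakConvInProdSpaceForD(Rd)}; since all $d_j$ equal $1$ it suffices to verify \eqref{Cond2*PropositionCriteriaForJointWeakConvInProdSpaceForD(Rd)}, and this is exactly \eqref{ConvFidisZn} (each $Z^{(j)}$ being a compound Poisson process, $T_{Z^{(j)}}=\mathbb{R}$, so ``for all finite $T\subseteq\mathbb{R}$'' matches ``for all finite $T\subseteq T_{Z^{(1)}}\cap\cdots\cap T_{Z^{(k)}}$''); and \eqref{StochBoundednessTaun} is precisely condition~\eqref{Cond3ExtendedArginfTheorem} for $\xi_\alpha=n(\tau_n-\tau)$ with the maximum norm. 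Thus \eqref{LimsupIneqExArginf} and \eqref{LiminfIneqExArginf} hold in this setting.

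Next I would pass to product sets. Fix closed $F_j\subseteq\mathbb{R}$, open $G_j\subseteq\mathbb{R}$ and $W_i$-continuity sets $B_i\subseteq\mathbb{R}$, and put $B:=B_1\times\cdots\times B_{k+1}\subseteq\mathbb{R}^{k+1}$. Then $\{n^{1/2}(\alpha_{n,i}-\alpha_i)\in B_i,\,1\le i\le k+1\}=\{n^{1/2}(\alpha_n-\alpha)\in B\}$ and $\{W_i\in B_i,\,1\le i\le k+1\}=\{W\in B\}$, and $B$ is a $W$-continuity set: from the inclusion $\partial B\subseteq\bigcup_{i=1}^{k+1}\big(\overline{B_1}\times\cdots\times\partial B_i\times\cdots\times\overline{B_{k+1}}\big)$ one gets $\mathbb{P}(W\in\partial B)\le\sum_{i=1}^{k+1}\mathbb{P}(W_i\in\partial B_i)=0$. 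Hence Theorem~\ref{ExtendedArginfTheorem} is applicable with this $B$, yielding
\begin{align*}
&\limsup_{n\to\infty}\mathbb{P}\big(n(\tau_{n,j}-\tau_j)\in F_j,\,1\le j\le k,\ n^{1/2}(\alpha_{n,i}-\alpha_i)\in B_i,\,1\le i\le k+1\big)\\
&\qquad\le\mathbb{P}\big(A(Z^{(j)})\cap F_j\neq\emptyset,\,1\le j\le k,\ W_i\in B_i,\,1\le i\le k+1\big),
\end{align*}
and the analogous lower bound with $A(Z^{(j)})\subseteq G_j$ in place of $A(Z^{(j)})\cap F_j\neq\emptyset$.

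Finally I would factorise the right-hand sides using independence. Since $A$ is a measurable map on $D(\mathbb{R})$, the independence asserted in \eqref{allareindependent} transfers to independence of $A(Z^{(1)}),\dots,A(Z^{(k)}),W_1,\dots,W_{k+1}$, so that
\begin{align*}
\mathbb{P}\big(A(Z^{(j)})\cap F_j\neq\emptyset,\,1\le j\le k,\ W_i\in B_i,\,1\le i\le k+1\big)=\prod_{j=1}^{k}\mu_j(F_j)\prod_{i=1}^{k+1}\mathbb{P}(W_i\in B_i),
\end{align*}
which is \eqref{limsup}; replacing $\{A(Z^{(j)})\cap F_j\neq\emptyset\}$ by $\{A(Z^{(j)})\subseteq G_j\}$ and $\mu_j$ by $\nu_j$ and invoking the lower bound gives \eqref{liminf}. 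The argument is essentially a substitution into Theorem~\ref{ExtendedArginfTheorem}; the only points that need any care are the verification that the product $B_1\times\cdots\times B_{k+1}$ is a $W$-continuity set and the transfer of \eqref{allareindependent} to the sets $A(Z^{(j)})$, neither of which is hard, so I do not anticipate a genuine obstacle beyond this bookkeeping.
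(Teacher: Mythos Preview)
Your proposal is correct and follows essentially the same route as the paper: apply Theorem~\ref{ExtendedArginfTheorem} with the choices $X_n^{(j)}=Z_n^{(j)}$, $\sigma_n=n^{1/2}(\alpha_n-\alpha)$, $\sigma=W$, then factorise the right-hand side via the independence \eqref{allareindependent}. Your write-up is in fact a bit more careful than the paper's, which leaves implicit both the verification that $B_1\times\cdots\times B_{k+1}$ is a $W$-continuity set and the passage from independence of the $Z^{(j)}$ to independence of the $A(Z^{(j)})$.
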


\begin{proof} Recall that we can apply Theorem \ref{ExtendedArginfTheorem}. Thus, by (\ref{LimsupIneqExArginf}) the upper limit in (\ref{limsup}) is less than or equal to
\begin{equation}
\begin{aligned}
&\mathbb{P}(A(Z^{(j)}) \cap F_j \neq \emptyset, 1 \le j \le k, W_i \in B_i, 1 \le i \le k+1)\\
&= \prod_{j=1}^k \mathbb{P}(A(Z^{(j)}) \cap F_j \neq \emptyset) \prod_{i=1}^{k+1} \mathbb{P}(W_i \in B_i),
\end{aligned}
\end{equation}
where the last equality holds by (\ref{allareindependent}). Thus, (\ref{limsup}) follows from the definition of the capacity functionals $\mu_j$.
Similarly, the second part of Theorem \ref{ExtendedArginfTheorem} says that the lower limit in (\ref{liminf}) is greater than or equal
\begin{equation}
\begin{aligned}
&\mathbb{P}(A(Z^{(j)}) \subseteq G_j \neq \emptyset, 1 \le j \le k, W_i \in B_i, 1 \le i \le k+1)\\
&= \prod_{j=1}^k \mathbb{P}(A(Z^{(j)}) \subseteq G_j \neq \emptyset) \prod_{i=1}^{k+1} \mathbb{P}(W_i \in B_i).
\end{aligned}
\end{equation}
Now, (\ref{liminf}) follows immediately from the definition of the con\-tainment-func\-tionals $\nu_j$.
\end{proof}

\noindent
This convergence result can be used for the construction of confidence rectangles. Here, first of all it should be noted that $Z^{(j)}(t) \rightarrow \infty$ a.s as $|t| \rightarrow \infty$. Therefore, $A(Z^{(j)})$ is compact as a union of at most finitely many compact intervals. Thus,
its smallest and largest minimizing point $\xi_{\text{min}}^{(j)}$ and $\xi_{\text{max}}^{(j)}$ a.s. exist and are real random variables.

\begin{corollary} \label{ConfIntervalsRegrEstimator}
Let the assumptions of Theorem \ref{LimitTheoremRegrEstimator} be valid and $\rho \in (0,1)$. Further, define $\gamma = (1 - \rho)^{\frac{1}{2k+1}}$ and choose
\begin{itemize}
\item for each $1 \le j \le k$: $a_j, b_j \in \mathbb{R}$ such that
\begin{align*}
\mathbb{P} \left( \xi_{\text{min}}^{(j)} > a_j, \xi_{\text{max}}^{(j)} < b_j \right) \ge \gamma,
\end{align*}
\item for each $i \in \lbrace 1,\hdots,k+1 \rbrace$: $v_i$ as the $\frac{\gamma+1}{2}$-quantile and $u_i$ as the $\frac{1-\gamma}{2}$-quantile of the normal distribution function $\Phi_{(0,\sigma_i)}$ with zero mean and variance $\sigma_i^2$, where the variances are given in Theorem 4.20 in \cite{Rosar2025}.
\end{itemize}
Then, the $(2 k+1)$-dimensional rectangle
\begin{equation*}
I_n := \prod_{j=1}^k \left( \tau_{n,j} - n^{-1} b_j, \tau_{n,j} - n^{-1} a_j \right) \times \prod_{i=1}^{k+1} \left[ \alpha_{n,i} - n^{-\frac{1}{2}} v_i, \alpha_{n,i} - n^{-\frac{1}{2}} u_i \right]
\end{equation*}
is an asymptotic confidence region for $(\tau,\alpha) \in \mathbb{R}^{2k+1}$ at level $(1-\rho)$, that is
\begin{equation} \label{asymptoticcoverageprobability}
 \liminf_{n \rightarrow \infty} \mathbb{P} \left( (\tau, \alpha) \in I_n \right) \ge 1-\rho.
\end{equation}
\end{corollary}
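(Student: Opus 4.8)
The plan is to rewrite the coverage event $\{(\tau,\alpha)\in I_n\}$ in the rescaled coordinates that appear in Theorem~\ref{LimitTheoremRegrEstimator} and then invoke its liminf-bound \eqref{liminf}. Membership of $(\tau,\alpha)$ in the product box $I_n$ means, coordinate by coordinate, that $\tau_{n,j}-n^{-1}b_j<\tau_j<\tau_{n,j}-n^{-1}a_j$ for $1\le j\le k$ and $\alpha_{n,i}-n^{-1/2}v_i\le\alpha_i\le\alpha_{n,i}-n^{-1/2}u_i$ for $1\le i\le k+1$; since $n$ and $\sqrt n$ are positive, rearranging (which reverses each inequality exactly once, preserving strictness) yields
\[
\{(\tau,\alpha)\in I_n\}=\Bigl\{\,n(\tau_{n,j}-\tau_j)\in(a_j,b_j),\ 1\le j\le k\,\Bigr\}\cap\Bigl\{\,\sqrt n(\alpha_{n,i}-\alpha_i)\in[u_i,v_i],\ 1\le i\le k+1\,\Bigr\}.
\]
The box is genuinely non-degenerate: $a_j<b_j$ because $\mathbb P(\xi_{\text{min}}^{(j)}>a_j,\xi_{\text{max}}^{(j)}<b_j)\ge\gamma>0$, and $u_i<v_i$ because $\tfrac{1-\gamma}{2}<\tfrac{\gamma+1}{2}$ while $\Phi_{(0,\sigma_i)}$ is strictly increasing.

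Next I would apply \eqref{liminf} with the open sets $G_j:=(a_j,b_j)$ and the intervals $B_i:=[u_i,v_i]$. Each $W_i$ is a centred Gaussian with variance $\sigma_i^2>0$, hence atomless, so $\mathbb P(W_i\in\partial B_i)=\mathbb P(W_i\in\{u_i,v_i\})=0$ and $B_i$ is a $W_i$-continuity set; thus \eqref{liminf} is applicable and gives
\[
\liminf_{n\to\infty}\mathbb P\bigl((\tau,\alpha)\in I_n\bigr)\ \ge\ \prod_{j=1}^k\nu_j\bigl((a_j,b_j)\bigr)\prod_{i=1}^{k+1}\mathbb P\bigl(W_i\in[u_i,v_i]\bigr).
\]

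It then remains to evaluate the $2k+1$ factors. For the Gaussian blocks, the quantile choices give $\mathbb P(W_i\in[u_i,v_i])=\Phi_{(0,\sigma_i)}(v_i)-\Phi_{(0,\sigma_i)}(u_i)=\tfrac{\gamma+1}{2}-\tfrac{1-\gamma}{2}=\gamma$. For the $\nu_j$-blocks, since $Z^{(j)}(t)\to\infty$ a.s.\ as $|t|\to\infty$ the set $A(Z^{(j)})$ is a (non-empty) finite union of compact intervals and hence lies in $[\xi_{\text{min}}^{(j)},\xi_{\text{max}}^{(j)}]$ with both endpoints belonging to it; consequently $\{A(Z^{(j)})\subseteq(a_j,b_j)\}=\{\xi_{\text{min}}^{(j)}>a_j\}\cap\{\xi_{\text{max}}^{(j)}<b_j\}$ (the strict inequalities being exactly what an open interval demands), so $\nu_j((a_j,b_j))=\mathbb P(\xi_{\text{min}}^{(j)}>a_j,\xi_{\text{max}}^{(j)}<b_j)\ge\gamma$ by the choice of $a_j,b_j$. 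Multiplying, $\liminf_{n}\mathbb P((\tau,\alpha)\in I_n)\ge\gamma^{k}\cdot\gamma^{k+1}=\gamma^{2k+1}=\bigl((1-\rho)^{1/(2k+1)}\bigr)^{2k+1}=1-\rho$, which is precisely \eqref{asymptoticcoverageprobability}.

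Once Theorem~\ref{LimitTheoremRegrEstimator} is in hand the argument is essentially bookkeeping; the only points that need a moment's care are the sign-and-strictness accounting in the first display (the $\tau$-factors turn into open intervals and the $\alpha$-factors into closed ones, exactly matching the hypotheses of \eqref{liminf}: open $G_j$, continuity-set $B_i$) and the identity $\{A(Z^{(j)})\subseteq(a_j,b_j)\}=\{\xi_{\text{min}}^{(j)}>a_j,\xi_{\text{max}}^{(j)}<b_j\}$, which rests on the compactness of $A(Z^{(j)})$. I do not expect any genuine obstacle beyond these.
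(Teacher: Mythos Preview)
Your proof is correct and follows essentially the same route as the paper's: rewrite the coverage event in the rescaled coordinates, apply the liminf-bound \eqref{liminf} of Theorem~\ref{LimitTheoremRegrEstimator} with $G_j=(a_j,b_j)$ and $B_i=[u_i,v_i]$, identify $\nu_j((a_j,b_j))=\mathbb{P}(\xi_{\text{min}}^{(j)}>a_j,\xi_{\text{max}}^{(j)}<b_j)$ via compactness of $A(Z^{(j)})$, and collect the $2k+1$ factors of size at least $\gamma$. You even supply a couple of details the paper leaves implicit (verifying that $[u_i,v_i]$ is a $W_i$-continuity set and that the box is non-degenerate).
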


\begin{proof} First, notice that $\gamma \in (0,1)$ and so $\frac{1+\gamma}{2} \in (\frac{1}{2},1)$ and $\frac{1-\gamma}{2} \in (0,\frac{1}{2})$, whence the quantiles $v_i$ and $u_i$ are well-defined.
Choose $G_j = (a_j, b_j)$, $1 \le j \le k$, and $B = [u_1, v_1] \times \hdots \times [u_{k+1},v_{k+1}]$. Then it follows from (\ref{liminf}) in Theorem \ref{LimitTheoremRegrEstimator} that
\begin{align*}
&\liminf_{n \rightarrow \infty} \mathbb{P} \left( (\tau, \alpha) \in I_n \right) \\
&= \liminf_{n \rightarrow \infty} \mathbb{P} \big( n (\tau_{n,j} - \tau_j) \in (a_j,b_j), 1\le j \le k, \\
&\qquad \qquad \qquad \qquad  u_i \le \sqrt{n} (\alpha_{n,i} - \alpha_)\le v_i, 1 \le i \le k+1 \big) \\
&\geq \prod_{j=1}^k \mathbb{P} \left( A \left( Z^{j} \right) \subseteq (a_j,b_j) \right) \cdot \prod_{j=1}^{k+1} \left( \Phi_{(0,\sigma_j)}(v_j) - \Phi_{(0,\sigma_j)}(u_j) \right) \\
&= \prod_{j=1}^k \mathbb{P} \left( \xi_{\text{min}}^{(j)} > a_j, \xi_{\text{max}}^{(j)} < b_j \right) \cdot \prod_{j=1}^{k+1} \left( \Phi_{(0,\sigma_j)}(v_j) - \Phi_{(0,\sigma_j)}(u_j) \right),
\end{align*}
Therefore, by definition of $a_j, b_j, u_j$ and $v_j$ the last product in the above display is greater than or equal to $\gamma^{2k+1}=1-\rho$, which gives the desired result (\ref{asymptoticcoverageprobability}).
\end{proof}

Notice that $d_1=\ldots=d_k=1$ and that by (\ref{allareindependent})
$$
\xi_{\text{min}}^{(1)}, \ldots, \xi_{\text{min}}^{(k)}, W_1,\ldots, W_{k+1} \text{ are independent}
$$
and
$$
\xi_{\text{max}}^{(1)}, \ldots, \xi_{\text{max}}^{(k)}, W_1,\ldots, W_{k+1} \text{ are independent}.
$$
Thus, an application of Corollary \ref{CorollaryUnivariateExtendedArginf} yields:

\begin{corollary} Under the assumptions of Theorem \ref{LimitTheoremRegrEstimator} we have that:
\begin{equation} \label{EqLimsupUnivExtendedArginfSmallInf}
\begin{aligned}
&\limsup_{\alpha} \mathbb{P} \left(  n (\tau_{n,j} - \tau_j) \leq x_j, 1 \le j \le k, \sqrt{n} (\alpha_{n,i} - \alpha_i) \in B_i, 1 \le i \le k+1\right)\\
&\leq \prod_{j=1}^k \mathbb{P} \left( \xi_{\text{min}}^{(j)} \leq x_j\right) \prod_{i=1}^{k+1} \mathbb{P}(W_i \in B_i),
\end{aligned}
\end{equation}
and
\begin{equation} \label{EqLiminfUnivExtendedArginfLargInf}
\begin{aligned}
&\liminf_{\alpha} \mathbb{P} \left(  n (\tau_{n,j} - \tau_j) < x_j, 1 \le j \le k, \sqrt{n} (\alpha_{n,i} - \alpha_i) \in B_i, 1 \le i \le k+1\right)\\
&\geq \prod_{j=1}^k \mathbb{P} \left( \xi_{\text{max}}^{(j)} < x_j\right) \prod_{i=1}^{k+1} \mathbb{P}(W_i \in B_i)
\end{aligned}
\end{equation}
for all $x_1, \hdots, x_k \in \mathbb{R}$ and for all $W_i$-continuity sets $B_i$.

\end{corollary}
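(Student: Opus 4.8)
The plan is to deduce the corollary directly from Corollary \ref{CorollaryUnivariateExtendedArginf}, reading off the identifications $\xi_{\alpha,j} \leftrightarrow n(\tau_{n,j}-\tau_j)$, $X^{(j)} \leftrightarrow Z^{(j)}$, and $\sigma_\alpha \leftrightarrow \sqrt{n}(\alpha_n-\alpha)$, $\sigma \leftrightarrow W=(W_1,\ldots,W_{k+1})$, with the auxiliary metric space taken to be $S=\mathbb{R}^{k+1}$. The hypotheses of Theorem \ref{ExtendedArginfTheorem} --- hence of Corollary \ref{CorollaryUnivariateExtendedArginf} --- have already been verified in the discussion preceding Theorem \ref{LimitTheoremRegrEstimator}: condition \eqref{Cond1PropositionCriteriaForJointWeakConvInProdSpaceForD(Rd)} follows from the weak convergence of each $(Z_n^{(j)})$ in $D(\mathbb{R})$ (which forces tightness), condition \eqref{Cond2*PropositionCriteriaForJointWeakConvInProdSpaceForD(Rd)} is exactly \eqref{ConvFidisZn}, and condition \eqref{Cond3ExtendedArginfTheorem} is \eqref{StochBoundednessTaun}. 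Since each $Z^{(j)}$ is a.s.\ coercive, $A(Z^{(j)})$ is a.s.\ compact and the smallest and largest minimizers $\xi_{\text{min}}^{(j)}$, $\xi_{\text{max}}^{(j)}$ exist a.s., so the extra assumption of Corollary \ref{CorollaryUnivariateExtendedArginf} is also met.

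The one step requiring a short argument is the passage from a product of one-dimensional continuity sets to a continuity set of $W$. Fix $W_i$-continuity sets $B_i \subseteq \mathbb{R}$, $1 \le i \le k+1$, and put $B := B_1 \times \cdots \times B_{k+1}$. Then
\[
\partial B \subseteq \bigcup_{i=1}^{k+1} \big( \overline{B_1} \times \cdots \times \partial B_i \times \cdots \times \overline{B_{k+1}} \big),
\]
and by the independence of $W_1,\ldots,W_{k+1}$ from \eqref{allareindependent} each set on the right has probability $\mathbb{P}(W_i \in \partial B_i) \prod_{l \ne i} \mathbb{P}(W_l \in \overline{B_l}) = 0$, because $\mathbb{P}(W_i \in \partial B_i)=0$. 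Hence $\mathbb{P}(W \in \partial B)=0$, i.e.\ $B$ is a $W$-continuity set.

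Next I would apply inequality \eqref{le} of Corollary \ref{CorollaryUnivariateExtendedArginf} with $x_j \in \mathbb{R}$, $1 \le j \le k$, and the $W$-continuity set $B$, which yields
\[
\limsup_{n \to \infty} \mathbb{P}\big( n(\tau_{n,j}-\tau_j) \le x_j, 1 \le j \le k, \sqrt{n}(\alpha_n-\alpha) \in B \big) \le \mathbb{P}\big( \xi_{\text{min}}^{(j)} \le x_j, 1 \le j \le k, W \in B \big).
\]
Now $\{\sqrt{n}(\alpha_n-\alpha) \in B\} = \{ \sqrt{n}(\alpha_{n,i}-\alpha_i) \in B_i,\ 1 \le i \le k+1\}$, and by the independence of $\xi_{\text{min}}^{(1)},\ldots,\xi_{\text{min}}^{(k)},W_1,\ldots,W_{k+1}$ (again \eqref{allareindependent}) the right-hand side equals $\prod_{j=1}^k \mathbb{P}(\xi_{\text{min}}^{(j)} \le x_j) \prod_{i=1}^{k+1} \mathbb{P}(W_i \in B_i)$; this is \eqref{EqLimsupUnivExtendedArginfSmallInf}. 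The bound \eqref{EqLiminfUnivExtendedArginfLargInf} is obtained identically from inequality \eqref{l} of Corollary \ref{CorollaryUnivariateExtendedArginf}, using the open orthants $(-\infty,x_j)$ and the independence of $\xi_{\text{max}}^{(1)},\ldots,\xi_{\text{max}}^{(k)},W_1,\ldots,W_{k+1}$ to factor the limiting lower bound.

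I expect no genuine difficulty here: once independence is invoked, the only non-bookkeeping point is the product-of-continuity-sets observation above, and even that is routine. The proof is essentially a transcription of Corollary \ref{CorollaryUnivariateExtendedArginf} into the regression notation, combined with \eqref{allareindependent}.
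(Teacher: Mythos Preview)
Your proposal is correct and follows essentially the same route as the paper: apply Corollary~\ref{CorollaryUnivariateExtendedArginf} with the identifications you list, and then factor the right-hand sides using the independence statement~\eqref{allareindependent}. Your explicit verification that $B_1\times\cdots\times B_{k+1}$ is a $W$-continuity set is a small extra detail the paper leaves implicit.
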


\section{Declaration of competing interest}

\noindent
\textbf{Ethical Standards:} This article does not contain any studies with human participants or animals performed by the authors. \\
\, \\
\textbf{Competing Interests:} This research did not receive any specific grant from funding agencies in the public, commercial, or not-for-profit sectors. The authors declare that they have no known competing financial interests or personal relationships that could have appeared to influence the work reported in this paper.

\appendix
\section{}\label{secA1}
In this section we prove some technical results used in our proofs above. The first one involves the functionals sargmin and largmin defined in
Definition 2.4 of Seijo and Sen \cite{seijo2011continuous}, where one has to replace $f$ by $-f$.
\begin{lemma} \label{ximaxmin} Let $f \in D(\mathbb{R}^d)$ with compact $A(f)$ (as for instance, when $f$ is coercive).
For $x =(x^{(1)},\ldots,x^{(d)}) \in \mathbb{R}^d$ consider $F=(-\infty,x]$ and $G=(-\infty,x)$. Then $x_{\text{min}}:=\text{sargmin}(f)$ and $x_{\text{max}} := \text{largmin}(f)$ exist and satisfy the following two implications:
\begin{itemize}
\item[(1)] $A(f) \cap F \neq \emptyset \quad \Leftrightarrow \quad x_{\text{min}} \le x.$
\item[(2)] $x_{\text{max}} < x \quad \Leftrightarrow \quad A(f) \subseteq G.$
\end{itemize}
\end{lemma}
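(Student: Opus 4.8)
The plan is to first secure the existence of $\text{sargmin}(f)$ and $\text{largmin}(f)$ and then to deduce the two equivalences directly from the way these functionals are built. Since the hypothesis ``compact $A(f)$'' (in particular, coercivity of $f$) is in force, $A(f)$ is a \emph{nonempty} compact subset of $\mathbb{R}^{d}$: nonemptiness follows because, by Lemma~2.2 in Ferger \cite{ferger2015arginf}, the lower semicontinuous regularisation $\bar f$ is minimised precisely on $A(f)$, and a coercive lower semicontinuous function attains its infimum. To obtain $x_{\text{min}}$ in the sense of Definition~2.4 of Seijo and Sen \cite{seijo2011continuous} (applied to $-f$) I would use iterated coordinatewise minimisation over compact sets: put $A_{0}:=A(f)$ and $x_{\text{min}}^{(1)}:=\min\{t^{(1)}:t\in A_{0}\}$, the minimum being attained because $A_{0}$ is compact and the coordinate projection is continuous; then $A_{1}:=\{t\in A_{0}:t^{(1)}=x_{\text{min}}^{(1)}\}$ is again nonempty and compact, and repeating the construction through the remaining coordinates produces $x_{\text{min}}=(x_{\text{min}}^{(1)},\dots,x_{\text{min}}^{(d)})\in A(f)$. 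Replacing each minimum by a maximum yields $x_{\text{max}}\in A(f)$. For $d=1$ this simply says $x_{\text{min}}=\min A(f)$ and $x_{\text{max}}=\max A(f)$.

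For (1) the implication ``$\Leftarrow$'' is immediate: if $x_{\text{min}}\le x$ then $x_{\text{min}}\in A(f)\cap(-\infty,x]=A(f)\cap F$, so the intersection is nonempty. For ``$\Rightarrow$'' I would take a witness $s\in A(f)\cap F$, so $s\in A(f)$ with $s\le x$ componentwise, and then show $x_{\text{min}}\le x$ by running through the coordinates of the construction above: at the first coordinate $x_{\text{min}}^{(1)}=\min\{t^{(1)}:t\in A_{0}\}\le s^{(1)}\le x^{(1)}$, and one continues through the nested sets $A_{1},A_{2},\dots$, using at each stage that the relevant minimum is taken over a set that still contains a point lying componentwise below $x$ in the coordinates considered so far. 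In the univariate case this collapses to the single inequality $x_{\text{min}}=\min A(f)\le s\le x$.

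For (2) the implication ``$\Leftarrow$'' is likewise immediate, since $A(f)\subseteq G=(-\infty,x)$ forces $x_{\text{max}}\in G$, i.e.\ $x_{\text{max}}<x$. For ``$\Rightarrow$'' I would assume $x_{\text{max}}<x$ and take an arbitrary $t\in A(f)$; by the extremal property of $x_{\text{max}}$ coming from the (now maximising) construction one obtains $t\le x_{\text{max}}<x$, hence $t\in G$, and since $t$ was arbitrary, $A(f)\subseteq G$. Here one must take care that the \emph{strict} inequality is propagated correctly when passing through the successive coordinates of the construction; for $d=1$ it is just $t\le\max A(f)=x_{\text{max}}<x$.

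The routine parts are the existence step and the two ``$\Leftarrow$'' implications, which only use $x_{\text{min}},x_{\text{max}}\in A(f)$. The step demanding the most care is the ``$\Rightarrow$'' direction in (1) and (2): one must pass from a statement about \emph{some} element of $A(f)$ lying in $F$ (respectively about \emph{every} element of $A(f)$ lying in $G$) to the corresponding statement about the single distinguished minimiser $x_{\text{min}}$ (respectively $x_{\text{max}}$), i.e.\ one has to verify that the iterated coordinatewise construction really does deliver an extremal minimiser compatible with these conclusions. This is transparent for $d=1$ — the case actually invoked in Corollary~\ref{CorollaryUnivariateExtendedArginf} and throughout the application in Section~\ref{SectionApplication} — since there $x_{\text{min}}=\min A(f)$ and $x_{\text{max}}=\max A(f)$; for general $d$ it amounts to the coordinatewise bookkeeping sketched above.
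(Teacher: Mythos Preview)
Your overall strategy—establish existence via iterated coordinatewise optimisation, then handle the two implications separately—matches the paper's. The easy directions and the univariate case are fine. But for $d\ge 2$ the inductive bookkeeping you sketch in the ``$\Rightarrow$'' directions has a real gap.

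In (2)$\Rightarrow$ you write that ``by the extremal property of $x_{\text{max}}$ \ldots\ one obtains $t\le x_{\text{max}}$'' for every $t\in A(f)$. This does \emph{not} follow from your construction: only $t^{(1)}\le x_{\text{max}}^{(1)}$ is immediate, since $x_{\text{max}}^{(1)}=\max\{u^{(1)}:u\in A(f)\}$. The second coordinate $x_{\text{max}}^{(2)}$ is the maximum over the \emph{restricted} set $\{u\in A(f):u^{(1)}=x_{\text{max}}^{(1)}\}$, so nothing prevents $t^{(2)}>x_{\text{max}}^{(2)}$ for a $t$ with $t^{(1)}<x_{\text{max}}^{(1)}$. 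The same difficulty appears in (1)$\Rightarrow$: you need $A_{1}=\{u\in A(f):u^{(1)}=x_{\text{min}}^{(1)}\}$ to contain a point lying below $x$ in the remaining coordinates, but your witness $s$ need not belong to $A_{1}$ at all.

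The paper's proof supplies precisely the missing device. Instead of staying inside the nested sets $A_{i}$, it manufactures at each stage a \emph{new} witness in $A(f)$ by mixing coordinates: from $y\in A(f)\cap F$ it passes to $(x_{\text{min}}^{(1)},y^{(2)},\ldots,y^{(d)})$, asserts that this point again lies in $A(f)\cap F$, and then applies the second step of the sargmin construction to it to get $x_{\text{min}}^{(2)}\le y^{(2)}\le x^{(2)}$; iterating yields $x_{\text{min}}\in A(f)\cap F$. For (2) the analogous move replaces $y^{(1)}$ by $x_{\text{max}}^{(1)}$, which places the new point in the set over which $x_{\text{max}}^{(2)}$ is computed, giving $y^{(2)}\le x_{\text{max}}^{(2)}<x^{(2)}$. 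This coordinate-replacement step—relying on the way Seijo and Sen's sargmin/largmin are defined on the function $f$ rather than merely on the set $A(f)$—is the idea your sketch is missing and that your ``bookkeeping'' remark does not resolve.
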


\begin{proof} (1) The direction $\Leftarrow$ is trivial. For the reverse one observe that, if $A(X) \cap F \neq \emptyset$, then there exists a point $y=(y^{(1)},\ldots,y^{(d)}) \in A(X) \cap F$.
Since in particularly $y$ is a minimizing point of $f$, it follows from the definition of $x_{\text{min}}=\mbox{sargmin}(f)$ that $x_{\text{min}}^{(1)} \le y^{(1)}$.
Moreover, $y^{(1)} \le x^{(1)}$, because $y \in F$. Consequently, $x_{\text{min}}^{(1)} \le x^{(1)}$ and therefore $(x_{\text{min}}^{(1)}, y^{(2)},\ldots,y^{(d)}) \in A(X) \cap F$. Applying the definition of sargmin$(f)$ again, we obtain that $x_{\text{min}}^{(2)} \le y^{(2)} \le x^{(2)}$ and so
$(x_{\text{min}}^{(1)}, x_{\text{min}}^{(2)}, y^{(3)} \ldots,y^{(d)}) \in A(X) \cap F$. Proceeding successively in that way one arrives at
$(x_{\text{min}}^{(1)}, x_{\text{min}}^{(2)}, \ldots,x_{\text{min}}^{(d)}) \in A(X) \cap F \subseteq F$. This shows the first implication (1).

As to (2) firstly notice that $\Leftarrow$ is obvious.  Thus, assume that $x_{\text{max}} < x $ and let $y=(y^{(1)},\allowbreak \ldots,y^{(d)}) \allowbreak \in A(f)$. Then $y^{(1)} \le x_{\text{max}}^{(1)}$ be definition of $x_{\text{max}}$ as largmax$(f)$. Consequently, $y^{(1)} < x^{(1)}$. Moreover, $(x_{\text{max}}^{(1)},y^{(2)},\ldots,y^{(d)}) \in A(f)$. Another application of the definition
of largmax$(f)$ yields that $y^{(2)} \le x_{\text{max}}^{(2)}< x^{(2)}$ and  that $(x_{\text{max}}^{(1)},x_{\text{max}}^{(2)}, y^{(3)},\ldots,y^{(d)}) \in A(f)$. Thus, we successively get that $y < x$, whence $y \in G$ as desired.
\end{proof}

\begin{lemma} \label{Lemma1Appendix}
Let $X_j$, $j \in \mathbb{N}$, be random variables in $D(\mathbb{R}^d)$. Then $T := \bigcap_{j \in \mathbb{N}} T_{X_j}$ lies dense in $\mathbb{R}^d$.
\end{lemma}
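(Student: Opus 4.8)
The plan is to reduce the statement to the one–process case and then to intersect over $j$. First I would record that for $f \in D(\mathbb{R}^d)$ and $t \in \mathbb{R}^d$ the evaluation map $\pi_t$ is continuous at $f$ (with respect to the Skorokhod metric $s$) if and only if $f$ is continuous at $t$; hence for any random element $X$ of $D(\mathbb{R}^d)$,
$$
\mathbb{R}^d \setminus T_X \;=\; \bigl\{\, t \in \mathbb{R}^d : \mathbb{P}\bigl(X \text{ is discontinuous at } t\bigr) > 0 \,\bigr\}.
$$
The key input, which follows from the analysis of $T_X$ carried out in Ferger (2010) \cite{Ferger2010}, is that for a single $X$ this set is \emph{thin}: it is a Lebesgue null set; more precisely there are at most countable sets $N_1(X),\dots,N_d(X) \subseteq \mathbb{R}$ with
$$
\mathbb{R}^d \setminus T_X \;\subseteq\; \bigcup_{i=1}^{d} \bigl\{\, t \in \mathbb{R}^d : t_i \in N_i(X) \,\bigr\},
$$
i.e.\ the fixed discontinuities of $X$ sit on countably many axis–parallel hyperplanes.

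Granting this, the lemma follows by a purely set–theoretic manipulation. Put $N_i := \bigcup_{j \in \mathbb{N}} N_i(X_j)$ for $1 \le i \le d$; as a countable union of at most countable sets, each $N_i$ is again at most countable. Then
$$
\mathbb{R}^d \setminus T \;=\; \bigcup_{j \in \mathbb{N}} \bigl(\mathbb{R}^d \setminus T_{X_j}\bigr) \;\subseteq\; \bigcup_{i=1}^{d} \bigl\{\, t \in \mathbb{R}^d : t_i \in N_i \,\bigr\},
$$
so that $T \supseteq \prod_{i=1}^{d} (\mathbb{R} \setminus N_i)$. Since each factor is the complement of a countable subset of $\mathbb{R}$, it is dense in $\mathbb{R}$, and a finite cartesian product of dense sets is dense in $\mathbb{R}^d$; therefore $T$ lies dense. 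Alternatively one may argue directly from the null–set version: a countable union of Lebesgue null sets is Lebesgue null, and the complement of a Lebesgue null set contains a point of every nonempty open ball, hence is dense.

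The only genuine work sits in the one–process input, and inside it the delicate points are (a) the joint measurability of $\{(t,\omega) : X(\omega) \text{ is discontinuous at } t\}$ that is needed to pass, via Fubini–Tonelli, from ``almost every trajectory of $X$ has a Lebesgue null discontinuity set'' to ``$\mathbb{R}^d \setminus T_X$ is Lebesgue null'', and (b) the fact, specific to $D(\mathbb{R}^d)$ and its quadrant–limit structure, that a joint discontinuity of an element of $D(\mathbb{R}^d)$ forces a discontinuity of one of its one–dimensional sections parallel to a coordinate axis, each such section being univariate càdlàg and hence having at most countably many discontinuities. Both are available from Ferger (2010) \cite{Ferger2010}; in the case $d = 1$ the whole input is classical, $\mathbb{R} \setminus T_X$ being even at most countable by the standard oscillation (``atom counting'') argument for processes with paths in $D(\mathbb{R})$, and then $T$ is co-countable, in particular dense.
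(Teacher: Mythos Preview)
Your argument is correct and follows essentially the same route as the paper: both reduce to the fact that the fixed-discontinuity set of a single $D(\mathbb{R}^d)$-valued process lies in a countable union of hyperplanes (hence is Lebesgue null), and then take a countable union over $j$. The only cosmetic differences are that the paper argues by contradiction (a ball cannot sit inside a Lebesgue null set) and invokes Neuhaus (1971) rather than Ferger (2010) for the one-process input.
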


\begin{proof}
Assume that the set $T$ is not dense in $\mathbb{R}^d$. Then there exist a point $x \in \mathbb{R}^d$ and a positive real $\epsilon$ such that the ball $B(x,\epsilon)$ with center $x$ and radius $\epsilon$ does not contain at least one element of $T$, i.e.
\begin{align*}
B(x,\epsilon) \subseteq T^{\mathrm{C}} = \bigcup_{j \in \mathbb{N}} T_{X_j}^{\mathrm{C}}.
\end{align*}
Let $a > 0$ with $B(x,\epsilon) \subseteq [-a,a]^d =: I_a$. Then
\begin{align} \label{BsubsetTX}
B(x,\epsilon) \subseteq \bigcup_{j \in \mathbb{N}} \left( T_{X_j}^{\mathrm{C}} \cap I_a \right).
\end{align}
We know that for each $j \in \mathbb{N}$,
\begin{align*}
T_{X_j}^{\mathrm{C}} = \left\lbrace t \in \mathbb{R}^d : X_j \text{ is not continuous at } t \text{ with positive probability} \right\rbrace.
\end{align*}
Let $X_j^{(a)}$ denote the restriction of $X_j$ to $I_a$. Then $X_j^{(a)}$ is a random variable in $D([-a,a]^d)$ introduced and studied by \cite{Neuhaus1971}. Now,
\begin{equation} \label{TXsubsetH}
\begin{aligned}
&T_{X_j}^{\mathrm{C}} \cap I_a \\
&= \left\lbrace t \in [-a,a]^d : X_j^{(a)} \text{ is discontinuous at $t$ with positive probability} \right\rbrace \\
&\subseteq H_j,
\end{aligned}
\end{equation}
where $H_j$ is the countable union of proper hyperplanes in $I_a$, confer Neuhaus (1971) \cite{Neuhaus1971}, p. 1290. Since the Lebesgue-measure $\lambda$ of every proper hyperplane is equal to zero, $\lambda(H_j) = 0$ for each $j \in \mathbb{N}$ by countability. Deduce from (\ref{BsubsetTX}) and (\ref{TXsubsetH}) that
\begin{align*}
B(x,\epsilon) \subseteq \bigcup_{j \geq 1} H_j,
\end{align*}
whence
\begin{align*}
0 \leq \lambda ( B(x,\epsilon) ) \leq \sum_{j \geq 1} \lambda(H_j) = 0,
\end{align*}
a contradiction.
\end{proof}

\bibliographystyle{elsarticle-num-names}
\bibliography{references}

\end{document}